\documentclass[11pt]{amsart}

% last update: Apr. 7, 2012

\newtheorem{theorem}{Theorem}[section]

\newtheorem{corollary}[theorem]{Corollary}

\newtheorem{lemma}[theorem]{Lemma}

\newtheorem{problem}[theorem]{Problem}
\newtheorem{proposition}[theorem]{Proposition}

\theoremstyle{definition}
% \newtheorem{definition}[theorem]{Definition}

% \newtheorem{question}[theorem]{Question}
% \newtheorem{example}[theorem]{Example}
% \newtheorem{numbered}[theorem]{}

%%%%%%%%%%%%%%%%%%%%%%%%%%%%%%%%%%%%%%%%%%%%%%%%%%%%%%%%%%%%%%%%%%%%%%%%%%%%%%%%%%%%%%%%%%%%%%%%%%%
% \usepackage{showkeys}
% \usepackage{amsmath,amssymb,amsthm}
% \usepackage[backref, colorlinks, bookmarks=true]{hyperref}
\usepackage[colorlinks, bookmarks=true]{hyperref}
\usepackage{color,graphicx,shortvrb}

\usepackage{enumerate}
\usepackage{amsmath}
\usepackage{amssymb}

\usepackage[latin 1]{inputenc}

%%%%%%%%%%%%%%%%%%%%%%%%%%%%%%%%%%%%%%%%%%%%%%%%%%%%%%%%%%%%%%%%%%%%%%%%%%%%%

\def\J#1#2#3{ \left\{ #1,#2,#3 \right\} }

\def\11{\textbf{$1$}}

\begin{document}

\numberwithin{equation}{section}

\title[2-local triple homomorphisms]{2-local triple homomorphisms on von Neumann algebras and JBW$^*$-triples}

\author[Burgos]{Maria Burgos}
\email{maria.burgos@uca.es}
\address{Departamento de Matematicas, Facultad de Ciencias Sociales y de la Educacion,  Universidad de Cadiz, 11405, Jerez de la Frontera, Spain.}

\author[Fern\'{a}ndez-Polo]{Francisco J. Fern\'{a}ndez-Polo}
\email{pacopolo@ugr.es}
\address{Departamento de An{\'a}lisis Matem{\'a}tico, Facultad de
Ciencias, Universidad de Granada, 18071 Granada, Spain.}

\author[Garc{\' e}s]{Jorge J. Garc{\' e}s}
\email{jgarces@correo.ugr.es}
\address{Departamento de An{\'a}lisis Matem{\'a}tico, Facultad de
Ciencias, Universidad de Granada, 18071 Granada, Spain.}

\author[Peralta]{Antonio M. Peralta}
\email{aperalta@ugr.es}
\address{Departamento de An{\'a}lisis Matem{\'a}tico, Facultad de
Ciencias, Universidad de Granada, 18071 Granada, Spain.}
\curraddr{Visiting Professor at Department of Mathematics, College of Science, King Saud University, P.O.Box 2455-5, Riyadh-11451, Kingdom of Saudi Arabia.}

\thanks{Authors partially supported by the Spanish Ministry of Science and Innovation,
D.G.I. project no. MTM2011-23843, and Junta de Andaluc\'{\i}a grant FQM375.
The fourth author extends his appreciation to the Deanship of Scientific Research at King Saud University (Saudi Arabia) for funding the work through research group no. RGP-361.}

\subjclass[2011]{Primary 46L05; 46L40} %; Secondary hsecondary classesi}

\keywords{Local triple homomorphism, triple homomorphism; 2-local triple homomorphism}

\date{}
\maketitle

\begin{abstract} We prove that every (not necessarily linear nor continuous) 2-local triple homomorphism from a JBW$^*$-triple into a JB$^*$-triple is linear and a triple homomorphism. Consequently, every 2-local triple homomorphism from a von Neumann algebra (respectively, from a JBW$^*$-algebra) into a C$^*$-algebra (respectively, into a JB$^*$-algebra) is linear and a triple homomorphism.
\end{abstract}

\maketitle
\thispagestyle{empty}

% \section{Introduction: Orthogonality preservers, $M$- and $L$-norms}
\section{Introduction}\label{sec:intro}

It is known that the Gleason-Kahane-\.{Z}elazko theorem (cf. \cite{Gle,KaZe,Ze68}) admits a reinterpretation affirming that every unital linear local homomorphism from a unital complex Banach algebra $A$ into $\mathbb{C}$ is multiplicative. Formally speaking, the notions of local homomorphisms and local derivations were introduced in 1990, in papers due to Larson and Sourour \cite{LarSou} and Kadison \cite{Kad90}. We recall that given two Banach algebras $A$ and $B,$ a linear mapping $T: A\to B$ (respectively, $T:A\to A$) is said to be a \emph{local homomorphism} (respectively, a \emph{local derivation}) if for every $a$ in $A$ there exists a homomorphism  $\Phi_{a} : A \to B$ (respectively, a derivation $D_a :A\to A$), depending on $a$, satisfying $T(a) = \Phi_a (a)$ (respectively, $T(a) = D_a (a)$). A flourishing research on linear local homomorphisms and derivations was built upon the results of Kadison, Larson and Sourour (compare, for example, \cite{AlbAyuKudayNurj2011, BattMol, BreSemrl93, BreSemrl95, BurFerGarPe2014, BurFerPe2013, CristLocalAutomorph, CristLocalder, Fos2012,
John01, KimKim04, KimKim05}, \cite{Mack}%, Mol2002, Mol2003, Mol2007, MolSemrl, Pe2014, Pop, Semrl97,
--\cite{Semrl2010} and \cite{ZhanPanYang}, among the over 100 references on the subject).\smallskip

If in the definition of \emph{local homomorphism}, we relax the assumption concerning linearity with a 2-local behavior, we are led to the notion of (not necessarily linear) \emph{2-local homomorphism}. Let $A$ and $B$ be two C$^*$-algebras, a not necessarily linear nor continuous mapping $T: A\to B$ is said to be a \emph{2-local homomorphism} (respectively, \emph{2-local $^*$-homomorphism}) if for every $a,b\in A$ there exists a bounded (linear) homomorphism (respectively, $^*$-homomorphism) $\Phi_{a,b}: A\to B$, depending on $a$ and $b$, such that $\Phi_{a,b} (a) = T(a)$ and $\Phi_{a,b}(b) = T(b)$ (see \cite{Semrl97}, \cite{BurFerGarPe2014preprint}).\smallskip

In a recent contribution, we establish a generalization of the Kowalski-S{\l}odkowski theorem for 2-local $^*$-homomorphisms on von Neumann algebras, showing that every (not necessarily linear nor continuous) 2-local $^*$-homomorphism from a von Neumann algebra or from a compact C$^*$-algebra into a C$^*$-algebra is linear and a $^*$-homomorphism. In the Jordan setting, it is proved that every 2-local Jordan $^*$-homomorphism from a JBW$^*$-algebra into a JB$^*$-algebra is linear and a Jordan $^*$-homomorphism (cf. \cite{BurFerGarPe2014preprint}).\smallskip

Every C$^*$-algebra $A$ admits a ternary product given by $$\{a,b,c\} := \frac12 (a b^*c + cb^* a) \ (a,b,c\in A).$$ A linear map $\Phi$  between C$^*$-algebras $A$ and $B$ satisfying $\Phi \left(\{a,b,c\}\right) = \{\Phi(a),\Phi(b),\Phi (c) \},$ is called a \emph{triple homomorphism}. A \textit{2-local triple homomorphism} between $A$ and $B$ is a not necessarily linear nor continuous map $T:A \to B$ such that for every $a,b\in A,$ there exists a triple homomorphism $\Phi_{a,b} : A\to B$ with $\Phi_{a,b} (a) = T(a)$ and $\Phi_{a,b}(b) = T(b)$. Motivated by the above commented Kowalski-S{\l}odkowski theorem for von Neumann algebras, it seems natural to consider the following independent problem:

\begin{problem}\label{problem 2-local triple hom C*-algebras}
Is every 2-local triple homomorphism between C$^*$-algebras (automatically) linear?
\end{problem}

It should be noted here that, even in the case of von Neumann algebras, the proofs and arguments given in the study of 2-local $^*$-homomorphisms \cite{BurFerGarPe2014preprint}, are no longer valid when considering Problem \ref{problem 2-local triple hom C*-algebras}, because triple homomorphisms between C$^*$-algebras do not preserve the natural partial order given by the positive cone in a C$^*$-algebra.\smallskip

Problem \ref{problem 2-local triple hom C*-algebras} can be posed in the more general setting of JB$^*$-triples.  Let $E$ and $F$ be two JB$^*$-triples (see subsection \ref{subsect prelimi} for definitions). A linear map $\Phi: E\to F$ which preserves the triple products is called a \emph{triple homomorphism}. A (not necessarily linear nor continuous) mapping $T: E\to F$ is said to be a \emph{2-local triple homomorphism}  if for every $a,b\in E$ there exists a bounded (linear) triple homomorphism $\Phi_{a,b}: E\to F$, depending on $a$ and $b$, such that $\Phi_{a,b} (a) = T(a)$ and
$\Phi_{a,b}(b) = T(b)$. According to these definitions, we consider the following generalization of Problem \ref{problem 2-local triple hom C*-algebras}:

\begin{problem}\label{problem 2-local triple hom JB*-triples}
Is every 2-local triple homomorphism between JB$^*$-triples (automatically) linear?
\end{problem}

In this paper we solve Problems \ref{problem 2-local triple hom C*-algebras} and \ref{problem 2-local triple hom JB*-triples} when the domain is a von Neumann algebra or a JBW$^*$-triple, respectively. Our main result (Theorem \ref{t 2-local triple hom on JBW$^*$ triples}) asserts that every (not necessarily linear nor continuous) 2-local triple homomorphism from a JBW$^*$-triple into a JB$^*$-triple is linear and a triple homomorphism, and consequently, every 2-local triple homomorphism from a von Neumann algebra (respectively, from a JBW$^*$-algebra) into a C$^*$-algebra (respectively, into a JB$^*$-algebra) is linear and a triple homomorphism (cf. Theorem \ref{t 2-local triple hom on JBW$^*$ algebras} and Corollary \ref{c 2-local triple hom on von Neumann algebras}). Our proofs heavily rely on the Bunce-Wright-Mackey-Gleason theorem for JBW$^*$-algebras \cite{BuWri89} and deep geometric arguments and techniques, developed in the setting of JB$^*$-triples by R. Braun, W. Kaup and H. Upmeier \cite{BraKaUp,KaUp}, B. Russo and Y. Friedman \cite{FriRu85}, and G. Horn \cite{Horn}.

\subsection{Preliminaries}\label{subsect prelimi}

A \emph{JB$^*$-triple} is a complex Banach space, $E,$ together with
a continuous triple product $\{ .,.,.\} : E\times E \times E \to E$, $(a,b,c) \mapsto \{a,b,c\}$, which is
conjugate-linear in $b$ and symmetric and bilinear in
$(a,c)$ and satisfies: \begin{enumerate}\item The \emph{Jordan identity}: $$L(a,b) L(x,y) -L(x,y) L(a,b) =
L(L(a,b)x,y) - L(x,L(b,a)y),$$ where $L(a,b)$ denotes the operator given by
$L(a,b) x = \{a,b,x\};$
\item $L(a,a)$ is an hermitian operator with non-negative spectrum;
\item $\|\J aaa\| = \|a\|^3,$
\end{enumerate}
every $a,b,x$ and $y$ in $E$.\smallskip

The notion of JB$^*$-triples was introduced by Kaup in the holomorphic classification of bounded symmetric domains in \cite{Ka83}. One of the many kindness exhibited by the class of JB$^*$-triples is that every C$^*$-algebra (respectively, every JB$^*$-algebra) is a
JB$^*$-triple with respect to $$\{ a,b,c\} := \frac12 ( ab^* c +c b^*
a)$$ (respectively, $\{ a,b,c\} := (a\circ b^*) \circ c + (c\circ b^*) \circ
a - (a\circ c) \circ b^*$).\smallskip

A \emph{JBW$^*$-triple} is a JB$^*$-triple which is also a dual
Banach space (with a unique isometric predual \cite{BarTi}). It is
known that the triple product of a JBW$^*$-triple is separately
weak$^*$ continuous (cf. \cite{BarTi}).\smallskip

We recall that an element $e$ in a JB$^*$-triple $E$ is said to be a \emph{tripotent} if $\{e,e,e\}=e$.
It is known that for each tripotent $e$ in $E$ we have a decomposition (called the \emph{Peirce
decomposition})
$$E= E_{2} (e) \oplus E_{1} (e) \oplus E_0 (e),$$ where for $j=0,1,2,$ $E_j (e)$ is
the $\frac{j}{2}$-eigenspace of $L(e,e)$. The Peirce subspaces $E_j(e)$ satisfy the following multiplication
rules: $$\left\{ {E_{i}(e)},{E_{j} (e)},{E_{k} (e)} \right\}\subseteq E_{i-j+k} (e),$$ if $i-j+k \in \{ 0,1,2\}$ and is zero
otherwise, and
$$\left\{ {E_{2} (e)},{E_{0}(e)},{E} \right\}= \left\{ {E_{0} (e)},{E_{2}(e)},{E} \right\} =0.$$ These multiplication rules are called the \emph{Peirce rules}.
The natural projection $P_{j} (e) : E\to E_{j} (e)$ of $E$ onto $E_j (e)$ is called the \emph{Peirce-$j$ projection}. The Peirce projections are contractive and satisfy $$P_2 (e) = L(e,e)(2 L(e,e) -Id), \ P_1 (e) = 4 L(e,e)(Id-L(e,e)),$$ $$ \ \hbox{ and } P_0 (e) = (Id-L(e,e))
(Id-2 L(e,e)),$$ where $Id$ denotes the identity map on $E$ (compare \cite{FriRu85}).  It is also known
that for each $x_0\in E_0 (e)$ and  $x_2\in E_2 (e)$ we have $\|x_0 + x_2 \| = \max \{ \|x_0\| ,\| x_2\|\}$ (c.f. \cite[Lemma 1.3]{FriRu85}). The tripotent $e$ is called \emph{complete} when $E_0(e)=\{0\}.$ \smallskip

Another interesting property of the Peirce decomposition asserts that $E_{2} (e)$ is a unital JB$^*$-algebra
with unit $e$, product $a \circ_{e} b = \J aeb$ and involution $a^{\sharp_{e}} = \J eae$ (c.f. \cite[Theorem 2.2]{BraKaUp} and
\cite[Theorem 3.7]{KaUp}).\smallskip

Accordingly to the standard terminology, for each element $a$ in a JB$^*$-triple $E$, we denote $a^{[1]} =
a$ and $a^{[2 n +1]} := \J a{a^{[2n-1]}}a$ $(\forall n\in \mathbb{N})$. It follows from the Jordan identity that JB$^*$-triples
are power associative, that is, $\J{a^{[2k-1]}}{a^{[2l-1]}}{a^{[2m-1]}}=a^{[2(k+l+m)-3]}$. In this paper, the symbol  $E_a$
will denote the JB$^*$-subtriple of $E$ generated by $a$. It is known that $E_a$ is JB$^*$-triple isomorphic (and hence isometric) to $C_0 (L)$ for some locally compact Hausdorff space $L\subseteq (0,\|a\|],$ such that $L\cup \{0\}$ is compact and $\|a\| \in L$. It is further known that there exists a triple isomorphism $\Psi$ from $E_a$ onto $C_{0}(L),$ satisfying $\Psi (a) (t) = t$ $(t\in L)$ (compare \cite[Lemma 1.14]{Ka83}). In particular, for each natural $n$, there exists (a unique) element $a^{[{1}/({2n-1})]}$ in $E_a$ satisfying $(a^{[{1}/({2n-1})]})^{[2n-1]} = a.$\smallskip

When $a$ is a norm one element in a JBW$^*$-triple $E$, the sequence $(a^{[{1}/({2n-1})]})$ converges in the weak$^*$ topology of $E$ to a tripotent in $E$, which is denoted by $r(a)$ and is called the \emph{range tripotent} of $a$. The tripotent $r(a)$ is the smallest tripotent $e$ in $E$ satisfying that $a$ is
positive in the JBW$^*$-algebra $E_{2} (e)$ (cf. \cite[Lemma 3.3]{EdRu88}).\smallskip

We refer to \cite{Chu2012} for a recent monograph on JB$^*$-triples and JB$^*$-algebras.\smallskip

Throughout the paper, when $A$ is a C$^*$-algebra or a JB$^*$-algebra, the symbol $A_{sa}$ will
stand for the set of all self-adjoint elements in $A$.

\section{Generalities on 2-local triple homomorphisms}\label{sec:generalities}

We recall that elements $a$ and $b$ in a JB$^*$-triple $E$ are said to be
\emph{orthogonal} (written $a\perp b$) when $L(a,b) =0$. It is known that $a\perp b$ if and only if $\{a,a,b\}=0$, if and only if $\{a,b,b\}=0$ (cf. \cite[Lemma 1.1]{BurFerGarMarPe}). A pair of subsets $M,N\subset E$ are called orthogonal ($M\perp N$) if for every $a\in M$, $b\in N$, we have $a\perp b$.\smallskip

Throughout the paper, given a 2-local triple homomorphism $T$ between JB$^*$-triples $E$ and $F$, for each $a,b\in E$, $\Phi_{a,b}$ will denote a (linear) triple homomorphism satisfying $T(a) = \Phi_{a,b} (a)$ and $T(b) = \Phi_{a,b}(b)$.\smallskip

We begin with some basic properties of 2-local triple homomorphisms.

\begin{lemma}\label{l basic properties}
Let $T: E\to F$ be a (not necessarily linear nor continuous)
2-local triple homomorphism between JB$^*$-triples. The
following statements hold:\begin{enumerate}[$(a)$]
\item $T$ is 1-homogeneous, that is, $T(\lambda a) = \lambda T(a)$
for every $a\in E$, $\lambda\in \mathbb{C}$;
\item $T$ is orthogonality preserving;
\item $\J{T(a)}{T(a)}{T(a)} = T(\J aaa)$, for every $a\in E$. In particular, every linear 2-local triple homomorphism between JB$^*$-triples is a triple homomorphism;
\item $T$ maps tripotents in $E$ to tripotents in $F$;
\item For each $a,b\in E$, $\|T(a)-T(b) \| \leq \|a-b\|$, that is, $T$ is 1-lipschitzian and hence continuous;
\item For each tripotent $e$ in $E$ with $T(e)\neq 0$, we have $T(E_j(e))\subseteq F_j(T(e)),$ for every $j=0,1,2$, $T(E_2(e)+E_1(e))\subseteq
    F_2(T(e))+F_1(T(e))$, and $T(E_0(e)+E_1(e))\subseteq
    F_0(T(e))+F_1(T(e)).$ Furthermore, $T(E_2(e)_{sa})\subseteq F_2(T(e))_{sa};$
\item For each tripotent $e\in E$ with $T(e) = 0$, the mapping $T$ is zero on $E_2(e)\oplus E_1(e)$.
\end{enumerate}
\end{lemma}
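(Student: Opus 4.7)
Every assertion follows by feeding the 2-local hypothesis a carefully chosen pair $(a,b)$ and then invoking the corresponding known property of the genuine linear triple homomorphism $\Phi_{a,b}$. For (a), apply the definition to $(a,\lambda a)$: linearity of $\Phi_{a,\lambda a}$ gives $T(\lambda a)=\Phi_{a,\lambda a}(\lambda a)=\lambda\,\Phi_{a,\lambda a}(a)=\lambda T(a)$. For (b), if $a\perp b$ then $\J aab=0$ by the orthogonality characterisation recalled at the start of the section, so $\J{T(a)}{T(a)}{T(b)}=\Phi_{a,b}(\J aab)=0$, forcing $T(a)\perp T(b)$. For (c), use the pair $(a,\J aaa)$, obtaining $T(\J aaa)=\Phi_{a,\J aaa}(\J aaa)=\J{T(a)}{T(a)}{T(a)}$; when $T$ is additionally linear, polarisation of the triple product recovers $\{\cdot,\cdot,\cdot\}$ from the diagonal map $a\mapsto\J aaa$, and $T$ is a triple homomorphism. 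Part (d) is (c) specialised to a tripotent. For (e), triple homomorphisms between JB$^*$-triples are contractive, so the witness $\Phi_{a,b}$ yields $\|T(a)-T(b)\|=\|\Phi_{a,b}(a-b)\|\le\|a-b\|$.

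For (f), fix a tripotent $e$ with $T(e)\neq 0$; by (d), $T(e)$ is a tripotent of $F$. Linear triple homomorphisms respect the Peirce decomposition of a tripotent, since they intertwine $L(e,e)$ with $L(\Phi(e),\Phi(e))$ and hence carry the $\tfrac{j}{2}$-eigenspace of the former into that of the latter. Feeding $x\in E_j(e)$ into the pair $(e,x)$ thus yields $T(x)=\Phi_{e,x}(x)\in F_j(T(e))$; the same Peirce argument applied to $\ker P_0(e)=E_2(e)+E_1(e)$ and $\ker P_2(e)=E_0(e)+E_1(e)$ gives the two ``sum'' assertions. The self-adjoint clause uses the identification of the involution on $E_2(e)$ as $x\mapsto\J exe$: for $x=x^{\sharp_e}$ one has $T(x)=\Phi_{e,x}(\J exe)=\J{T(e)}{T(x)}{T(e)}=T(x)^{\sharp_{T(e)}}$. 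Finally, (g) follows from the same Peirce observation in the degenerate case $T(e)=0$: the spaces $F_1(0)$ and $F_2(0)$ are trivial, so every witness $\Phi_{e,x}$ sends $E_2(e)\oplus E_1(e)$ into $F_1(0)+F_2(0)=\{0\}$, and $T(x)=\Phi_{e,x}(x)=0$ on that subspace.

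The only non-routine ingredient is Peirce preservation, used in (f) and (g); once it is in hand, the rest of the lemma is careful bookkeeping of which pair to plug into the 2-local definition. No additivity or linearity of $T$ is invoked beyond what the 2-local hypothesis supplies pointwise, which is essential since those are precisely the features we do not yet have for $T$.
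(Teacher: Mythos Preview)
Your proposal is correct and follows essentially the same route as the paper: each item is dispatched by choosing a suitable pair for the 2-local hypothesis and using that triple homomorphisms intertwine $L(e,e)$ with $L(\Phi(e),\Phi(e))$ (and hence all Peirce projections). The only cosmetic differences are that the paper handles the inclusion $T(E_0(e)+E_1(e))\subseteq F_0(T(e))+F_1(T(e))$ via $\ker Q(e)$ rather than $\ker P_2(e)$, and for (g) it writes out the identity $a=\{e,e,a_2\}+2\{e,e,a_1\}$ explicitly instead of invoking the degenerate Peirce spaces $F_j(0)$; both formulations are equivalent.
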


\begin{proof} The proof of $(a)$ is standard (compare \cite[Lemma 2.1]{BurFerGarPe2014preprint}). For the statement $(b)$, we recall that $a\perp b$ if and only if $\{a,a,b\}=0$ \cite[Lemma 1.1]{BurFerGarMarPe}. Let us consider the triple homomorphism $\Phi_{a,b}: E\to F$. Then $$\{T(a),T(a), T(b)\} = \{\Phi_{a,b}(a),\Phi_{a,b}(a), \Phi_{a,b}(b)\} = \Phi_{a,b} \{a,a,b\} = 0,$$ which proves $T(a) \perp T(b)$.\smallskip

$(c)$ Considering the triple homomorphism $\Phi_{a,a^{[3]}}$, we have $$\{T(a),T(a),T(a)\} = \{\Phi_{a,a^{[3]}}(a),\Phi_{a,a^{[3]}}(a),\Phi_{a,a^{[3]}}(a)\} $$ $$ = \Phi_{a,a^{[3]}} (a^{[3]}) = T(\{a,a,a\}).$$ The second statement follows from the polarization formula $$ 8\{x,y,z\}=\sum_{k=0}^3 \sum_{j=1}^2 i^k(-1)^j \left(x+i^ky+(-1)^jz\right)^{[3]}.$$

$(d)$ is clear from $(c)$, and $(e)$ follows from the fact that every triple homomorphism between JB$^*$-triples is contractive (cf. \cite[Lemma 1]{BarDan} and the proof of \cite[Lemma 2.1]{BurFerGarPe2014preprint}).\smallskip

$(f)$ Let us take a tripotent $e\in E$ with $T(e)$ a non-zero tripotent in $F$. For each $a\in E_{j} (e)$ we have $L(e,e) (a) = \frac{j}{2} a$. Therefore, $$L(T(e),T(e)) T(a) = \{T(e),T(e),T(a)\} = \{\Phi_{e,a} (e), \Phi_{e,a} (e),\Phi_{e,a} (a)\} $$ $$= \Phi_{e,a} (\{e,e,a\}) = \frac{j}{2} \Phi_{e,a} (a) = \frac{j}{2} T(a),$$ witnessing that $T(E_j (e)) \subseteq F_{j} (T(e))$, for every $j=0,1,2$. We can similarly\hyphenation{simi-larly} show that $T(a)\in \ker (Q(T(e))) =  F_0(T(e))+F_1(T(e))$ for every $a\in \ker (Q(e)) = E_0(e)+E_1(e)$ which shows that $T(E_0(e)+E_1(e))\subseteq F_0(T(e))+F_1(T(e)).$\smallskip

Since $F_2(T(e))+F_1(T(e))= \ker (P_0 (T(e)))$ and $$P_0 (T(e)) = (Id_{F}-L(T(e),T(e))) (Id_{F}-2 L(T(e), T(e))),$$ we can show, applying the triple homomorphism $\Phi_{a,e},$ that, for each element $a\in \ker (P_0 (e)) = E_2(e)+E_1(e),$ we have  $T(a)\in \ker (P_0 (T(e)))$, which gives the other inclusion.\smallskip

Suppose $a\in E_2 (e)_{sa}= \{x\in E_2 (e) : x= x^{\sharp_{e}}= \{e,x,e\}\}$. Since $$\{T(e), T(a), T(e)\} = \{\Phi_{e,a} (e), \Phi_{e,a} (a) , \Phi_{e,a} (e) \} $$ $$= \Phi_{e,a} \left(\{e,a,e\}\right) = \Phi_{e,a} (a) = T(a),$$ we deduce that $T(a) \in F_2 (T(e))_{sa}$.\smallskip

$(g)$ Suppose $T(e)=0$ and $a= a_1+a_2$, where $a_j\in E_{j} (e)$ for $j=1,2$. In such a case $$ T(a) = \Phi_{a,e} (a) = \Phi_{a,e} (\{e,e,a_2\}) + 2 \Phi_{a,e} (\{e,e,a_1\}) $$ $$=  \{\Phi_{a,e} (e),\Phi_{a,e}(e),\Phi_{a,e}(a_2) \} + 2  \{\Phi_{a,e} (e),\Phi_{a,e}(e),\Phi_{a,e}(a_1) \} $$ $$=\{T(e),T(e),\Phi_{a,e}(a_2) \} +  2  \{T(e),T(e),\Phi_{a,e}(a_1) \} = 0.$$
\end{proof}

We shall establish next a triple version of \cite[Lemma 3.1]{BurFerGarPe2014preprint}. %Following standard notation, for each element $a$ in a JB$^*$-triple $E$,
%the symbol $E_{a}$ will denote the JB$^*$-subtriple of $E$ generated by $a$.

\begin{lemma}\label{l linearity on single generated subalgebras}
Let $T: E\to F$ be a (not necessarily linear) 2-local triple homomorphism between JB$^*$-triples. Then, for each $a\in E$, $T|_{E_{a}} : E_{a} \to F$ is a linear mapping.
\end{lemma}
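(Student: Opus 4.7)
The plan is to exploit the structure of $E_a$ as a singly generated JB$^*$-triple. By the functional-calculus description recalled in the preliminaries, $E_a$ is the closed complex linear span of $\{a^{[2n+1]} : n\geq 0\}$; under the isomorphism $\Psi: E_a\to C_0(L)$ with $\Psi(a)(t)=t$, these odd powers correspond to the monomials $t^{2n+1}$, whose linear span is the algebraic triple subalgebra generated by $t$ and thus dense in $C_0(L)$. Consequently, any linear triple homomorphism out of $E_a$ is determined by its value at $a$: if $\Phi,\Psi: E_a\to F$ are two triple homomorphisms with $\Phi(a)=\Psi(a)$, then $\Phi(a^{[2n+1]}) = \Phi(a)^{[2n+1]} = \Psi(a)^{[2n+1]} = \Psi(a^{[2n+1]})$ for every $n$, so they agree on $\spn\{a^{[2n+1]}\}$ and, by continuity of triple homomorphisms (cf.~Lemma~\ref{l basic properties}$(e)$), on all of $E_a$.

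With this rigidity in hand, the argument is short. Fix $a\in E$. For every $b\in E_a$ the 2-local hypothesis furnishes a triple homomorphism $\Phi_{a,b}: E\to F$ with $\Phi_{a,b}(a)=T(a)$ and $\Phi_{a,b}(b)=T(b)$. Restricting, $\Phi_{a,b}|_{E_a}: E_a\to F$ is a linear triple homomorphism taking the prescribed value $T(a)$ at $a$. By the rigidity above, all these restrictions coincide: there exists a single linear triple homomorphism $\Phi: E_a\to F$ with $\Phi_{a,b}|_{E_a} = \Phi$ for every $b\in E_a$. Then for any $b\in E_a$,
$$T(b)=\Phi_{a,b}(b)=\Phi(b),$$
so $T|_{E_a}=\Phi$ is linear, as required.

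The key conceptual step is the uniqueness observation that $\Phi_{a,b}|_{E_a}$ is independent of $b$, which in turn rests on the density of $\spn\{a^{[2n+1]}: n\geq 0\}$ in $E_a$ together with the continuity of triple homomorphisms. Neither ingredient is technically demanding; the potential pitfall is merely notational, namely keeping track of which $\Phi_{a,b}$ is evaluated at which element so that one indeed concludes $T(b)=\Phi(b)$ for the same $\Phi$ regardless of $b\in E_a$.
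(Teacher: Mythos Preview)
Your proof is correct and follows essentially the same approach as the paper: both use that $\Phi_{a,b}(a)=T(a)$ forces $\Phi_{a,b}(a^{[2k-1]})=T(a)^{[2k-1]}$, hence $T(b)=\sum_k \alpha_k\, T(a)^{[2k-1]}$ on the dense span of odd powers, and then pass to the closure by continuity. Your packaging via a ``rigidity'' statement (any triple homomorphism out of $E_a$ is determined by its value at $a$) is a mild reformulation of the paper's direct computation; the only cosmetic difference is that you invoke continuity of triple homomorphisms to extend the rigidity to all of $E_a$, whereas the paper invokes continuity of $T$ (Lemma~\ref{l basic properties}$(e)$) to extend linearity from the span to its closure.
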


\begin{proof} %For each natural $n$, we consider the $^*$-homomorphism $\Phi_{h,h^{n}}$. Then $$T(h)^n = \Phi_{h,h^{n}} (h)^n = \Phi_{h,h^{n}} (h^n) = T(h^n).$$
Let us consider an element $b\in E_{a}$ of the form $\displaystyle b= \sum_{k=1}^{m} \alpha_k a^{[2k-1]}$ and the triple homomorphism $\Phi_{a,b}$. The identity $$T(b) = \Phi_{a,b} \left(\sum_{k=1}^{m} \alpha_k a^{[2k-1]}\right) = \sum_{k=1}^{m} \alpha_k \Phi_{a,b} \left( a\right)^{[2k-1]} = \sum_{k=1}^{m} \alpha_k T \left( a\right)^{[2k-1]}, $$ proves that $T$ is linear on the linear span of the set $\{ a^{[2k-1]} : k\in \mathbb{N}\}$. The continuity of $T$ shows that $T|_{E_{a}}$ is linear.
\end{proof}

Our next technical result establishes that every  (not necessarily linear) 2-local triple
homomorphism between JB$^*$-triples is additive on every couple of
orthogonal tripotents. The result is a generalization of \cite[Lemma 2.2]{BurFerGarPe2014preprint}
to the setting of JB$^*$-triples;
it should be noted that, in this more general setting, we need new and independent geometric arguments.

\begin{lemma}\label{l additivity on orthogonal tripotents}
Let $T: E\to F$ be a (not necessarily linear) 2-local triple homomorphism between JB$^*$-triples.
Let $e$ and $f$ be two orthogonal tripotents in $E$. Then $T(e + f) =
T(e) + T(f)$.
\end{lemma}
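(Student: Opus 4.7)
The plan is to define $\delta := T(e+f) - T(e) - T(f)$, prove that $\delta$ is a tripotent orthogonal to both $T(e)$ and $T(f)$, and then extract an incompatible linear expression for $\delta$ by invoking Lemma~\ref{l linearity on single generated subalgebras} at the cleverly chosen auxiliary element $v := 2e+f$.

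First I would observe that $u := e+f$ is itself a tripotent (the cross terms in $(e+f)^{[3]}$ vanish by orthogonality), and by Lemma~\ref{l basic properties}(b),(d) the elements $T(e), T(f), T(u)$ are tripotents with $T(e) \perp T(f)$. Applying the triple homomorphisms $\Phi_{u,e}$ and $\Phi_{u,f}$, linearity yields
\begin{equation*}
\Phi_{u,e}(f) = T(u) - T(e) = T(f) + \delta, \qquad \Phi_{u,f}(e) = T(u) - T(f) = T(e) + \delta,
\end{equation*}
and these are tripotents orthogonal, respectively, to $T(e)$ and to $T(f)$. Peirce-decomposing with respect to $T(e)$ and $T(f)$, and using $T(e) \perp T(f)$, I conclude $\delta \perp T(e)$ and $\delta \perp T(f)$. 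Expanding $T(u)^{[3]} = (T(e) + T(f) + \delta)^{[3]}$ with all three summands pairwise orthogonal, every cross term dies and I obtain $\delta^{[3]} = \delta$, so $\delta$ is a tripotent.

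The decisive step is to invoke Lemma~\ref{l linearity on single generated subalgebras} at $v := 2e+f$. A routine computation from $e \perp f$ gives $v^{[3]} = 8e+f$, whence $6e = v^{[3]} - v \in E_v$ and therefore $e, f \in E_v$. Lemma~\ref{l linearity on single generated subalgebras} then forces $T(2e+f) = 2T(e) + T(f)$. Passing to $\Psi := \Phi_{u,v}$, linearity of $\Psi$ combined with $f = 2u - v$ gives
\begin{equation*}
\Psi(f) = 2T(u) - T(2e+f) = T(f) + 2\delta.
\end{equation*}
Since $\Psi(f)$ must be a tripotent and $\delta \perp T(f)$, the orthogonal expansion $(T(f) + 2\delta)^{[3]} = T(f) + 8\delta$ must equal $T(f) + 2\delta$, forcing $\delta = 0$. (A norm-theoretic variant: contractivity of $\Psi$ gives $1 \geq \|\Psi(f)\| = \|T(f) + 2\delta\| = \max(1, 2\|\delta\|)$, which again yields $\delta = 0$.)

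The main obstacle is discovering the right auxiliary element. The naive candidate $v = e+f$ is useless because $(e+f)^{[3]} = e+f$ forces $E_{e+f} = \mathbb{C}(e+f)$, so Lemma~\ref{l linearity on single generated subalgebras} supplies no information about $T(e)$ or $T(f)$ separately. The choice $v = 2e+f$ is precisely what is needed: the scaling produces $(2e+f)^{[3]} = 8e+f \neq 2e+f$, so $E_v$ now separates $e$ from $f$ and delivers an honestly linear value $T(2e+f) = 2T(e) + T(f)$, which can then be played off against the 2-local formula coming from $\Phi_{u,v}$ to kill $\delta$.
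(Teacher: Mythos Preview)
Your proof is correct and takes a genuinely different route from the paper's. The paper argues by continuity: for $\lambda \in [0,1]$ it writes $T(e+\lambda f)$ two ways via $\Phi_{e+\lambda f,\,e}$ and $\Phi_{e+\lambda f,\,f}$, deduces that $P_0(T(e))T(e+\lambda f) - \lambda T(f)$ is a tripotent depending continuously on $\lambda$, notes it vanishes at $\lambda=0$, and concludes by connectedness that it vanishes at $\lambda=1$. Your approach instead exploits Lemma~\ref{l linearity on single generated subalgebras} at the auxiliary element $v = 2e+f$, where the distinct scalings force $e$ and $f$ to lie in $E_v$.

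Your argument can, however, be shortened dramatically. Once you have established $e, f \in E_v$, you also have $e+f \in E_v$, and Lemma~\ref{l linearity on single generated subalgebras} then yields $T(e+f) = T(e) + T(f)$ \emph{directly}. Your steps 1, 2 and 4 --- the analysis of $\delta$ as a tripotent orthogonal to $T(e)$ and $T(f)$, and the detour through $\Phi_{u,v}$ --- are unnecessary. The whole proof collapses to three lines: $v^{[3]} - v = 6e$ gives $e \in E_v$; hence $f = v - 2e \in E_v$; hence $e+f \in E_v$, and linearity of $T|_{E_v}$ finishes. This is arguably more elementary than the paper's connectedness argument. (One minor sloppiness in your step~4: the norm identity $\|T(f) + 2\delta\| = \max(1, 2\|\delta\|)$ tacitly assumes $T(f) \neq 0$; the tripotent-cube version of the argument is clean in both cases.)
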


\begin{proof} Take a real number $\lambda\in (0,1]$. In this case we have
$$T(e+\lambda f)=\Phi_{e+\lambda f, e}(e+\lambda f)=T(e)+\lambda \Phi_{e+\lambda
f,e}(f)$$ with $\Phi_{e+\lambda f,e}(f)\perp T(e)$. We similarly have $$T(e+\lambda
f)=\Phi_{e+\lambda f, f}(e+\lambda f)=\Phi_{e+\lambda f, f}(e)+\lambda
T(f).$$

Combining the above identities we have that $$\Phi_{e+\lambda f,
f}(e)=T(e)+\lambda (\Phi_{e+\lambda
f,e}(f)-T(f))$$ $$=T(e)+P_0(T(e))\left(T(e+\lambda f)\right)-\lambda T(f).$$ Since $\Phi_{e+\lambda f, f}(e)$ and $T(e)$ are tripotents and $$T(e)\perp P_0(T(e))\left(T(e+\lambda f)\right)-\lambda T(f),$$
it follows that $P_0(T(e))\left(T(e+\lambda f)\right)-\lambda T(f)$ also is a tripotent for
every $\lambda \in (0,1]$.\smallskip

Clearly, the function $f:[0,1]\to \{0,1\}$
defined by $$f(\lambda):=\|P_0(T(e))T(e+\lambda f)-\lambda T(f)\|,$$
is continuous with $f(0)=0$, thus $f(\lambda)=0 \;\; \forall \lambda
\in [0,1]$. This implies, in particular, that $f(1)=0$, or equivalently,
$P_0(T(e))T(e+f)=T(f),$ which finishes the proof.
\end{proof}

The linearity of every (not necessarily linear) 2-local triple homomorphism on finite linear combinations
of mutually orthogonal tripotents follows next.

\begin{lemma}\label{l linearity on orthogonal tripotents}
Let $T: E\to F$ be a (not necessarily linear) 2-local triple homomorphism between
JB$^*$-triples. Let $e_1,\ldots, e_n$ be mutually orthogonal tripotents in $E$. Then
\begin{enumerate}[$(a)$]
\item $\displaystyle T\left(\sum_{i=1}^{n} e_i \right) =
    \sum_{i=1}^{n} T(e_i) $;
\item $\displaystyle  T\left(\sum_{i=1}^{n} \lambda_i e_i \right) = \sum_{i=1}^{n} \lambda_i T(e_i) $,
for every $\lambda_1, \ldots, \lambda_n\in \mathbb{C}$.
\end{enumerate}
\end{lemma}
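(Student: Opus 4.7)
The plan is to deduce $(a)$ from Lemma~\ref{l additivity on orthogonal tripotents} by induction on $n$, and to deduce $(b)$ from $(a)$ together with Lemma~\ref{l linearity on single generated subalgebras} via a phase-removal and equal-modulus grouping argument.

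For $(a)$ I would induct on $n$, with base case $n=2$ being Lemma~\ref{l additivity on orthogonal tripotents}. For the inductive step, first verify that $f := \sum_{i=1}^{n-1} e_i$ is itself a tripotent: expanding $\{f,f,f\}$ by trilinearity and using the Peirce rules together with the fact that $e_i\perp e_j$ forces $e_j\in E_0(e_i)$, every cross term $\{e_i,e_j,e_k\}$ with indices not all equal vanishes, leaving $\{f,f,f\} = \sum_{i=1}^{n-1} e_i^{[3]} = f$. Bilinearity of $L(\cdot,\cdot)$ in the second slot also gives $f\perp e_n$. Lemma~\ref{l additivity on orthogonal tripotents} then yields $T(f + e_n) = T(f) + T(e_n)$, and the inductive hypothesis applied to $T(f)$ closes the step.

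For $(b)$, I would first normalize phases: after discarding any $\lambda_i = 0$, set $\tilde e_i := (\lambda_i/|\lambda_i|)\, e_i$, which remain mutually orthogonal tripotents, so that $a := \sum_i \lambda_i e_i = \sum_i |\lambda_i|\, \tilde e_i$; 1-homogeneity (Lemma~\ref{l basic properties}$(a)$) converts $\lambda_i T(e_i)$ into $|\lambda_i| T(\tilde e_i)$. Next I would merge equal-modulus contributions: let $r_1, \ldots, r_m$ be the distinct positive values assumed by the $|\lambda_i|$, and put $u_j := \sum_{|\lambda_i| = r_j} \tilde e_i$, which is a tripotent by the same computation as in $(a)$. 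The $u_j$ are pairwise orthogonal and $a = \sum_{j=1}^m r_j u_j$, so $a^{[2k+1]} = \sum_{j=1}^{m} r_j^{2k+1}\, u_j$ for every $k$.

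The key step is to place each $u_j$ inside $E_a$. Since $r_1^2, \ldots, r_m^2$ are distinct, Lagrange interpolation produces a polynomial $q$ with $q(r_j^2) = 1/r_j$ and $q(r_k^2) = 0$ for $k\ne j$; then $p(x) := x\, q(x^2)$ is an odd polynomial satisfying $p(a) = \sum_k p(r_k)\, u_k = u_j \in E_a$. Lemma~\ref{l linearity on single generated subalgebras} then makes $T|_{E_a}$ linear, so $T(a) = \sum_j r_j T(u_j)$; unpacking $T(u_j) = \sum_{|\lambda_i|=r_j} T(\tilde e_i)$ via $(a)$ and reinserting the phases via 1-homogeneity yields $T(a) = \sum_i \lambda_i T(e_i)$. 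The main obstacle is precisely this membership $u_j\in E_a$: it is what forces both the phase normalization (so the remaining coefficients are real and positive) and the grouping step (so they become \emph{distinct}), since only then does the odd functional calculus on $E_a\cong C_0(L)$ separate the $u_j$.
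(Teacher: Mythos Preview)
Your argument for $(a)$ matches the paper's exactly: induction on $n$, with the inductive step handled by noting that $e_1+\cdots+e_{n-1}$ is a tripotent orthogonal to $e_n$ and invoking Lemma~\ref{l additivity on orthogonal tripotents}.

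For $(b)$ your route is correct but genuinely different from the paper's. The paper does not pass through $E_a$ at all: instead it exploits the 2-local property directly, applying $\Phi_{z,e_j}$ (with $z=\sum_i\lambda_i e_i$) to compute $\{T(e_j),T(e_j),T(z)\}=\lambda_j T(e_j)$ for each $j$, and then applying $\Phi_{z,e}$ (with $e=\sum_i e_i$) together with part~$(a)$ and the mutual orthogonality of the $T(e_j)$ to obtain
\[
T(z)=\{T(e),T(e),T(z)\}=\sum_j\{T(e_j),T(e_j),T(z)\}=\sum_j\lambda_j T(e_j).
\]
This is shorter and avoids the phase normalization, the equal-modulus grouping, and the Lagrange interpolation step. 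Your approach, by contrast, has the conceptual virtue of showing that $(b)$ already follows from Lemma~\ref{l basic properties}$(a)$, Lemma~\ref{l linearity on single generated subalgebras}, and part~$(a)$ alone, with no further direct appeal to the 2-local structure; it also makes transparent that the statement is really about the odd functional calculus on the abelian subtriple generated by $a$.
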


\begin{proof} $(a)$ We shall argue by induction on $n$. The case $n=1$ is clear, while the case $n=2$ is established in Lemma \ref{l additivity on orthogonal tripotents}. Let us suppose that $e_1,\ldots, e_n, e_{n+1}$ are mutually orthogonal tripotents in $E$. Since $e = e_1 +\ldots +e_n$ and $e_{n+1}$ are orthogonal tripotents in $E$, Lemma \ref{l additivity on orthogonal tripotents} and the induction hypothesis prove that $$T\left(\sum_{i=1}^{n+1} e_i \right) = T (e + e_{n+1} ) = T(e) + T(e_{n+1}) = \sum_{i=1}^{n} T(e_i)  + T(e_{n+1}).$$

$(b)$  Fix $j\in \{ 1,\ldots, n\}$ and set $z= \displaystyle \sum_{i=1}^{n} \lambda_i e_i$ and  $e=\displaystyle \sum_{i=1}^{n} e_i$. The identity $$ \left\{ T(e_j), T(e_j) ,T\left(\sum_{i=1}^{n} \lambda_i e_i \right) \right\} = \left\{ \Phi_{z,e_j}(e_j), \Phi_{z,e_j}(e_j) ,\Phi_{z,e_j}\left(\sum_{i=1}^{n} \lambda_i e_i \right) \right\}$$ $$= \Phi_{z,e_j} \left( \left\{e_j,e_j, \sum_{i=1}^{n} \lambda_i e_i \right\} \right)  = \Phi_{z,e_j} (\lambda_j e_j ) = \lambda_j T(e_j),$$ implies that $$T\left(\sum_{i=1}^{n} \lambda_i e_i \right) = \Phi_{z,e} \left( z \right)    = \Phi_{z,e} \left(\left\{e,e,z\right\} \right) = \left\{\Phi_{z,e} (e),\Phi_{z,e} (e),\Phi_{z,e} \left(z \right)\right\} $$ $$= \left\{T (e),T (e),T \left(z \right)\right\}  = \left\{T \left(\sum_{j=1}^{n} e_j\right),T \left(\sum_{j=1}^{n} e_j\right),T \left(\sum_{i=1}^{n}  \lambda_i e_i \right)\right\}$$ $$=\hbox{(by $(a)$)}= \left\{\sum_{j=1}^{n} T \left(e_j\right),\sum_{j=1}^{n} T \left(e_j\right),T \left(\sum_{i=1}^{n}  \lambda_i e_i \right)\right\}=\hbox{(by orthogonality)} $$ $$= \sum_{j=1}^{n} \left\{ T \left(e_j\right),T \left(e_j\right),T \left(\sum_{i=1}^{n}  \lambda_i e_i \right)\right\} = \sum_{j=1}^{n}  \lambda_j T(e_j).$$
\end{proof}

Let $E$ and $F$ be JB$^*$-triples. We recall that a (not necessarily linear) mapping $f: E\to F$ is called \emph{orthogonally additive} if $f(a+b) = f(a)+f(b)$ for every $a\perp b$ in $E$.

\begin{proposition}\label{p 2-local triple homomorphisms are OA}
Let $E$ be a JBW$^*$-triple, $F$ a JB$^*$-triple,
and suppose that $T : E \to F$ is a (not necessarily linear)
2-local triple homomorphism. Then $T$ is orthogonally additive.
\end{proposition}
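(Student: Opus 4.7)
The strategy is to reduce orthogonal additivity to the already-established Lemma \ref{l linearity on orthogonal tripotents}$(b)$ by approximating $a$ and $b$ in norm through spectral sums of mutually orthogonal tripotents, and then passing to the limit using the continuity of $T$ recorded in Lemma \ref{l basic properties}$(e)$.

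Fix orthogonal elements $a, b \in E$. The case where one of them vanishes is trivial by the $1$-homogeneity of $T$, so assume both are nonzero, and rescale so that $\|a\|, \|b\| \leq 1$. Let $r(a)$ and $r(b)$ denote the range tripotents of $a$ and $b$ inside $E$. From $a \perp b$ one deduces that $r(a) \perp r(b)$, and therefore, by the Peirce rules, $E_2(r(a)) \perp E_2(r(b))$. Inside the JBW$^*$-algebra $E_2(r(a))$ the element $a$ is positive, so the JBW$^*$-algebra spectral theorem yields, for each $n$, a sum
$$a_n = \sum_{i=1}^{k_n} \lambda_i^{(n)} e_i^{(n)}, \qquad \lambda_i^{(n)} > 0,$$
where the $e_i^{(n)}$ are mutually orthogonal projections of $E_2(r(a))$ (hence mutually orthogonal tripotents of $E$), with $\|a - a_n\| \to 0$. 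An analogous decomposition $b_n = \sum_{j=1}^{l_n} \mu_j^{(n)} f_j^{(n)}$ is obtained for $b$ inside $E_2(r(b))$. Because $E_2(r(a)) \perp E_2(r(b))$, the entire collection $\{e_i^{(n)}\}_i \cup \{f_j^{(n)}\}_j$ is a finite system of mutually orthogonal tripotents in $E$.

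Applying Lemma \ref{l linearity on orthogonal tripotents}$(b)$ to this system gives
$$T(a_n + b_n) = \sum_{i=1}^{k_n} \lambda_i^{(n)} T(e_i^{(n)}) + \sum_{j=1}^{l_n} \mu_j^{(n)} T(f_j^{(n)}) = T(a_n) + T(b_n),$$
and letting $n \to \infty$, the $1$-Lipschitz property of $T$ from Lemma \ref{l basic properties}$(e)$ forces $T(a+b) = T(a) + T(b)$. The genuinely nontrivial input is the norm-spectral approximation of $a$ by finite positive combinations of orthogonal tripotents; this is precisely where the JBW$^*$-triple hypothesis on $E$ enters, through the spectral theorem available in the JBW$^*$-algebra $E_2(r(a))$, and it would in general fail in $E_a \cong C_0(L)$ because $L$ need not carry any nontrivial clopen subsets. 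The propagation $a\perp b \Rightarrow r(a)\perp r(b)$, while standard, is the secondary point that must be checked carefully before the spectral sums can be combined.
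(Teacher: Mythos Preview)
Your argument is correct and follows essentially the same route as the paper's own proof: both pass to the range tripotents $r(a)\perp r(b)$, approximate $a$ and $b$ in norm by finite real combinations of mutually orthogonal tripotents lying in $E_2(r(a))$ and $E_2(r(b))$ respectively (the paper invokes \cite[Lemma 3.11]{Horn} where you invoke the JBW$^*$-algebra spectral theorem), apply Lemma \ref{l linearity on orthogonal tripotents}$(b)$ to the combined orthogonal system, and then use the $1$-Lipschitz continuity from Lemma \ref{l basic properties}$(e)$ to pass to the limit.
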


\begin{proof}
Let $a$ and $b$ be two orthogonal elements in $E$. The range tripotents $r(a)$ and $r(b)$ are orthogonal, and the JBW$^*$-subtriples $E_2 (r(a))$ and $E_2 (r(b))$ are also orthogonal (cf. \cite[Lemma 1.1]{BurFerGarMarPe}).\smallskip

For each $\varepsilon >0$, there exists two algebraic elements $\displaystyle a_\varepsilon = \sum_{k= 1}^{m_1} \lambda_k e_k $ and $b_{\varepsilon}= \sum_{j= 1}^{m_2} \mu_j v_j$, where $\lambda_k,\mu_j\in \mathbb{R}$, $e_1,\ldots, e_{m_1}$ and $v_1,\ldots, v_{m_2}$ are tripotents in $E_2 (r(a))$ and $E_2 (r(b))$, respectively, and  $e_j\perp e_k,$ $v_j\perp v_k$ for every $j\neq k$, such that
$\|a-a_{\varepsilon}\| < \frac{\varepsilon}{4}$, and $\|b-b_{\varepsilon}\| < \frac{\varepsilon}{4}$ (cf. \cite[lemma 3.11]{Horn}). It is clear that
$a_{\varepsilon}+b_{\varepsilon}$ is a linear combination of mutually orthogonal tripotents. Then, by Lemma \ref{l basic
properties}$(e)$ and Lemma \ref{l linearity on orthogonal tripotents}$(b)$,
$$\|T(a+b)-T(a)-T(b)\|=\|T(a+b)-T(a_{\varepsilon}+b_{\varepsilon})+T(a_{\varepsilon})+T(b_{\varepsilon})-T(a)-T(b)\|$$ $$\leq \|T(a+b)
-T(a_{\varepsilon}+b_{\varepsilon})\| + \|T(a_{\varepsilon})- T(a)
\|$$ $$ +\|T(b_{\varepsilon})-T(b) \| < \| (a+b)
-(a_{\varepsilon}+b_{\varepsilon})\| + \|a_{\varepsilon}- a \|
+\|b_{\varepsilon}-b \|< \varepsilon. $$ Since $\varepsilon$ was
arbitrarily chosen, we get $T(a+b) = T(a) + T(b).$
\end{proof}

A simple induction argument, combined with Proposition \ref{p 2-local triple homomorphisms are OA}, shows:

\begin{corollary}\label{c stability for ell infty sums}
Let $\left(E_i\right)_{i=1}^{n}$ be a finite family of
JBW$^*$-triples and let $F$ be a JB$^*$-triple. Suppose that, for
every $i$, every (not necesarily linear) 2-local triple homomorphism $T: E_i \to
F$ is linear. Then every (not necesarily linear) 2-local triple homomorphism $\displaystyle
T: \bigoplus_{i=1,\ldots,n}^{\ell_{\infty}} E_i \to F$
is linear. $\hfill \Box$
\end{corollary}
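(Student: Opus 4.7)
The plan is to reduce the problem to the hypothesis on each individual summand, with orthogonal additivity doing the gluing. Set $E := \bigoplus_{i=1,\ldots,n}^{\ell_{\infty}} E_i$ and, for each $i$, let $\iota_i : E_i \to E$ denote the canonical inclusion as the $i$-th summand and $\pi_i : E \to E_i$ the coordinate projection; both are (linear) triple homomorphisms. Since the $\ell_{\infty}$-sum of JBW$^*$-triples is again a JBW$^*$-triple (its predual being the $\ell_1$-sum of the individual preduals), Proposition \ref{p 2-local triple homomorphisms are OA} is available for the given $T : E \to F$.

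The crucial first step is to show that, for each fixed $i$, the composition $T_i := T \circ \iota_i : E_i \to F$ is itself a (not necessarily linear nor continuous) 2-local triple homomorphism. Indeed, given $a, b \in E_i$, the 2-local hypothesis on $T$ furnishes a triple homomorphism $\Phi_{\iota_i(a),\iota_i(b)} : E \to F$ agreeing with $T$ at $\iota_i(a)$ and $\iota_i(b)$; composing on the right with the triple homomorphism $\iota_i$ produces a triple homomorphism $E_i \to F$ coinciding with $T_i$ at $a$ and at $b$. The standing hypothesis then forces $T_i$ to be linear.

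To finish, I would write an arbitrary element of $E$ as $a = \iota_1(a_1) + \cdots + \iota_n(a_n)$ and note that the summands $\iota_i(a_i)$ are mutually orthogonal in $E$ (their triple products vanish componentwise). A short induction on $n$, applying Proposition \ref{p 2-local triple homomorphisms are OA} at each step, then gives
\[
T(a) \;=\; \sum_{i=1}^{n} T(\iota_i(a_i)) \;=\; \sum_{i=1}^{n} T_i(a_i) \;=\; \sum_{i=1}^{n} (T_i \circ \pi_i)(a),
\]
which exhibits $T$ as a finite sum of linear maps. The only even mildly subtle point in the whole argument is verifying that restriction to a summand preserves the 2-local triple homomorphism property, and this hinges on the very simple observation that $\iota_i$ is a triple homomorphism, so that composition with it respects triple products; beyond this, the conclusion follows by assembling results already established.
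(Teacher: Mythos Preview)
Your proof is correct and is precisely the argument the paper has in mind: the paper simply records ``A simple induction argument, combined with Proposition \ref{p 2-local triple homomorphisms are OA}, shows'' and places a $\Box$, whereas you have written out that induction in full, including the (necessary but routine) verification that each restriction $T\circ\iota_i$ is again a 2-local triple homomorphism. Nothing is missing or different in substance.
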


We recall now a result, due to Friedmann and Russo, which has been borrowed from \cite[Lemma 1.6]{FriRu85}.

\begin{lemma}\label{l FriRu 1.6} \cite[Lemma 1.6]{FriRu85} Let $e$ be a tripotent in a JB$^*$-triple $E$. Then, for each norm-one element $x\in E$ satisfying $P_2(e)x=e$, we have $P_1(e) x=0$. $\hfill\Box$
\end{lemma}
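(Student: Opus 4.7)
The plan is to write $x = e + x_1 + x_0$ with $x_j \in E_j(e)$ via the Peirce decomposition and the hypothesis $P_2(e)x = e$, and to show that $\{x_1, x_1, e\} = 0$; from this, the characterization of orthogonality recalled at the start of Section~2 will force $x_1 = 0$, which is the desired conclusion $P_1(e)x = 0$.

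The first reduction is to pass from $x$ to a norm-one element with no Peirce-$0$ part. Applying the operator $L(e,e)$, which acts as $1, \tfrac{1}{2}, 0$ on $E_2(e), E_1(e), E_0(e)$ respectively, yields $L(e,e)x = e + \tfrac{1}{2}x_1$. The general bound $\|\{u,v,w\}\| \le \|u\|\|v\|\|w\|$ valid in every JB$^*$-triple gives $\|L(e,e)\| \le 1$, so $\|e + \tfrac{1}{2}x_1\| \le 1$. On the other hand, $P_2(e)(e + \tfrac{1}{2}x_1) = e$ and the Peirce projection $P_2(e)$ is contractive, so $\|e + \tfrac{1}{2}x_1\| \ge \|e\| = 1$. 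Hence $\|e + \tfrac{1}{2}x_1\| = 1$.

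Set $f = e + \tfrac{1}{2}x_1$. The axiomatic identity $\|y^{[3]}\| = \|y\|^3$ gives $\|f^{[3]}\| = 1$. A direct expansion of $\{f,f,f\}$, using that $\{e,x_1,e\} \in E_3(e) = 0$ and the symmetry of the triple product in its outer arguments, shows
\[
P_2(e)\bigl(f^{[3]}\bigr) = e + \tfrac{1}{2}\{x_1, x_1, e\}.
\]
The key additional ingredient is that $\{x_1, x_1, e\}$ is a positive element of the unital JB$^*$-algebra $E_2(e)$ with unit $e$. Granting this, the unital JB$^*$-algebra norm formula gives $\|e + \tfrac{1}{2}\{x_1,x_1,e\}\| = 1 + \tfrac{1}{2}\|\{x_1,x_1,e\}\|$, and contractivity of $P_2(e)$ yields
\[
1 = \|f^{[3]}\| \ge \|P_2(e)\bigl(f^{[3]}\bigr)\| = 1 + \tfrac{1}{2}\|\{x_1, x_1, e\}\|,
\]
forcing $\{x_1, x_1, e\} = 0$, i.e., $x_1 \perp e$. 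Since $x_1 \in E_1(e)$ we have $\{e,e,x_1\} = \tfrac{1}{2}x_1$, and orthogonality $x_1 \perp e$ is equivalent to $\{e,e,x_1\} = 0$, so $x_1 = 0$.

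The main obstacle is the positivity claim for $u := \{x_1, x_1, e\}$. Self-adjointness in $E_2(e)$ follows quickly from the Jordan identity applied to $(a,b,x,y) = (x_1, x_1, e, e)$ and evaluated at $e$, which collapses to $u = \{e, u, e\} = u^{\sharp_e}$. Positivity itself is more delicate; I would prove it either by reducing to the C$^*$-algebra model (where $x_1 \in E_1(e)$ has the form $exe^{\perp} + e^{\perp} x e$ and a short Peirce-level computation yields $\{x_1, x_1, e\} = \tfrac{1}{2}(e x_1 x_1^* e + e x_1^* x_1 e)$, which is manifestly positive in $eAe$) and passing to the Jordan triple setting, or via a GNS-type argument showing that for every state $\varphi$ of $E_2(e)$, the form $(a,b) \mapsto \varphi\{a,b,e\}$ is positive semidefinite on $E$. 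This structural positivity fact is the one nontrivial input beyond the Peirce calculus.
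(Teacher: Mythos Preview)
The paper does not supply a proof of this lemma at all: it is quoted verbatim from \cite[Lemma 1.6]{FriRu85} and closed with $\Box$. So there is no in-paper argument to compare your proposal against; the authors simply invoke the Friedman--Russo result as a black box.

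Your argument is correct in outline and is in fact close to the original Friedman--Russo proof. The reduction from $x$ to $f = L(e,e)x = e + \tfrac{1}{2}x_1$ via $\|L(e,e)\|\le 1$ and contractivity of $P_2(e)$ is clean, the Peirce computation $P_2(e)(f^{[3]}) = e + \tfrac{1}{2}\{x_1,x_1,e\}$ is right (using $\{e,x_1,e\}\in E_3(e)=0$ and $\{x_1,x_1,e\}=\{e,x_1,x_1\}$ by outer symmetry), and the final orthogonality step $\{x_1,x_1,e\}=0 \Rightarrow x_1\perp e \Rightarrow \{e,e,x_1\}=0 \Rightarrow x_1=0$ is exactly the characterization recalled in Section~2. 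Your Jordan-identity verification of $u=u^{\sharp_e}$ is also correct.

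The one point you rightly flag as nontrivial --- positivity of $\{x_1,x_1,e\}$ in the JB$^*$-algebra $E_2(e)$ --- is precisely the structural input that Friedman--Russo establish just before their Lemma~1.6 (it is essentially their Lemma~1.5: for any $\varphi\in E^*$ with $\|\varphi\|=\varphi(e)=1$, the form $(a,b)\mapsto \varphi\{a,b,e\}$ is positive semidefinite on $E$). Your GNS-type sketch points in exactly the right direction; the C$^*$-algebra calculation you give is correct as motivation but does not by itself cover the general JB$^*$-triple case. So your proof is complete modulo that single cited fact, which is standard in the Friedman--Russo framework the paper is already relying on.
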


In order to make the results more accessible, we have splitted the technical arguments needed in the proofs of our main results into a series of lemmas and propositions, which assure certain almost-linearity properties of 2-local triple homomorphisms.

\begin{lemma}\label{l triple hom with z open unit ball} Let $T : E \to F$ be a (not necessarily linear) 2-local triple homomorphism between JB$^*$-triples. Suppose $e$ is a tripotent in $E$ and $z\in E_0 (e)$ with $\|z\|<1$. Then, for each $w\in E$ and each triple homomorphism $\Phi_{w,e+z} : E\to F$ satisfying $\Phi_{w,e+z} (w) = T(w)$ and $\Phi_{w,e+z} (e+z) = T(e+z)$, we have $\Phi_{w,e+z} (e) = T (e)$, and consequently, $\Phi_{w,e+z} (E_j (e))\subseteq F_j (T(e))$, for every $j=0,1,2$.
\end{lemma}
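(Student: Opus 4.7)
The plan is to produce two decompositions of $T(e+z)$ of the shape \emph{tripotent plus orthogonal correction of norm $<1$}---one coming from $\Phi:=\Phi_{w,e+z}$ and one from a second 2-local witness $\Phi_{e,e+z}$---and then to argue that the tripotent summand in such a decomposition is uniquely determined by $T(e+z)$. This will force $\Phi(e)=T(e)$, from which the Peirce containments are immediate.

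Setting $\Phi:=\Phi_{w,e+z}$, linearity of $\Phi$ combined with the fact that triple homomorphisms preserve orthogonality (recall that $z\in E_0(e)$ is equivalent to $e\perp z$) gives
\[
T(e+z)=\Phi(e)+\Phi(z),\qquad \Phi(e)\perp \Phi(z),\qquad \|\Phi(z)\|\le \|z\|<1,
\]
with $\Phi(e)$ a tripotent in $F$. Applying the 2-local property to the pair $(e,e+z)$ produces a triple homomorphism $\Phi_{e,e+z}$ satisfying $\Phi_{e,e+z}(e)=T(e)$ and $\Phi_{e,e+z}(e+z)=T(e+z)$, so linearity yields
\[
T(e+z)=T(e)+u,\qquad u:=\Phi_{e,e+z}(z)\perp T(e),\qquad \|u\|\le \|z\|<1.
\]
It therefore suffices to show: whenever $v\in F$ is a tripotent with $T(e+z)=v+w$, $w\in F_0(v)$, and $\|w\|<1$, the tripotent $v$ is uniquely determined by $T(e+z)$; applying this to both decompositions then gives $T(e)=\Phi(e)$.

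For the uniqueness I would pass to the bidual. If $\|T(e+z)\|<1$, the orthogonality $v\perp w$ forces $\|v\|<1$, so $v=0$ since $v$ is a tripotent. Otherwise $v\neq 0$ and $\|T(e+z)\|=1$. Using power associativity together with $v\perp w$, one checks $T(e+z)^{[1/(2n-1)]}=v+w^{[1/(2n-1)]}$ for every $n\in\mathbb{N}$; taking weak$^{*}$-limits in $F^{**}$ yields $r(T(e+z))=v+r(w)$ with $v\perp r(w)$, so $v$ is a projection in the JBW$^{*}$-algebra $A:=F^{**}_{2}(r(T(e+z)))$, and $v\le r(T(e+z))$ in the tripotent order. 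Inside $A$, the positive element $T(e+z)$ decomposes along the orthogonal pair $\bigl(v,\,r(T(e+z))-v\bigr)$ as the unit of $A_{2}(v)$ plus a positive element of $A_{2}(r(T(e+z))-v)$ of norm $\|w\|<1$, whose spectrum therefore lies in $[0,1)$. Consequently the spectral projection of $T(e+z)$ at the value $1$, computed inside $A$, equals $v$; since this identification depends only on $T(e+z)$, any two such tripotents must coincide. This spectral identification in the JBW$^{*}$-algebra $F^{**}_{2}(r(T(e+z)))$ is the main obstacle.

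Once $\Phi(e)=T(e)$, the Peirce containment is automatic: for $x\in E_j(e)$ the identity $L(e,e)x=\tfrac{j}{2}x$ is mapped by the triple homomorphism $\Phi$ to $L(T(e),T(e))\Phi(x)=\tfrac{j}{2}\Phi(x)$, placing $\Phi(x)\in F_j(T(e))$ for each $j=0,1,2$.
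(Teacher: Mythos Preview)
Your argument is correct, but it takes a considerably heavier route than the paper's. Both proofs start the same way: from $\Phi:=\Phi_{w,e+z}$ you get $T(e+z)=\Phi(e)+\Phi(z)$ with $\Phi(e)$ a tripotent, $\Phi(e)\perp\Phi(z)$ and $\|\Phi(z)\|<1$, and from the witness $\Phi_{e,e+z}$ you get $T(e+z)=T(e)+u$ with $T(e)\perp u$ and $\|u\|<1$. The divergence is in how you extract the tripotent summand. You pass to $F^{**}$, take odd roots, form range tripotents via weak$^*$ limits, and identify $v$ as the spectral projection of $T(e+z)$ at the value $1$ inside the JBW$^*$-algebra $F^{**}_{2}(r(T(e+z)))$. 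This works, but it invokes the bidual, an extension of the range-tripotent construction to elements of norm $<1$, and spectral calculus in JBW$^*$-algebras---machinery you yourself flag as ``the main obstacle''.

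The paper avoids all of this with a single elementary observation: for a tripotent $v$ and $w\perp v$ with $\|w\|<1$, orthogonality gives $(v+w)^{[3^n]}=v+w^{[3^n]}$, and $\|w^{[3^n]}\|=\|w\|^{3^n}\to 0$, so $(v+w)^{[3^n]}\to v$ \emph{in norm}. Applied in $E$ this says $e=\lim_n (e+z)^{[3^n]}$; pushing through the continuous triple homomorphism $\Phi$ yields
\[
\Phi(e)=\lim_n\bigl(\Phi(e+z)\bigr)^{[3^n]}=\lim_n\bigl(T(e+z)\bigr)^{[3^n]}=\lim_n\bigl(T(e)+u\bigr)^{[3^n]}=T(e),
\]
with every limit taken in the norm of $F$. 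So the ``uniqueness of the tripotent summand'' you isolate is simply $v=\lim_n a^{[3^n]}$, computed in $F$ itself---no bidual, no weak$^*$ topology, no spectral theory. Your approach buys a more structural description of $v$ (as a spectral projection), but for the present lemma the norm-limit trick is both shorter and entirely self-contained.
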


\begin{proof} By considering the triple homomorphism $\Phi_{e, e+z}$, we obtain that $$T(e+z) = \Phi_{e, e+z} (e+z) = T(e) + \Phi_{e, e+z} (z),$$ where $\Phi_{e, e+z} (z)\in F_{0} (\Phi_{e, e+z} (e)) = F_{0} (T(e))$ and $\|\Phi_{e, e+z} (z) \|\leq \|z\|<1$. Since $\|z\|<1$, $\|\Phi_{e, e+z} (z)\|<1$ and $T(z)\in F_0 (T(e))$ (cf. Lemma \ref{l basic properties}), we have,
$$ \Phi_{w,e+z}(e)=\Phi_{w,e+z}\left(\lim_{n\to \infty} (e+z)^{[3^n]}\right)=\lim_{n\to \infty} \left(\Phi_{w,e+z}(e+z)\right)^{[3^n]}$$
$$= \lim_{n\to \infty} \left(T(e+z)\right)^{[3^n]}=\lim_{n\to \infty} \left(T(e)+\Phi_{e, e+z} (z)\right)^{[3^n]}=T(e),$$ where all
the above limits are in the norm-topology.
\end{proof}

\begin{lemma}\label{l technical on tripotents}
Let $T: E\to F$ be a (not necessarily linear) 2-local triple homomorphism between JB$^*$-triples.
Then the following statements hold:
\begin{enumerate}[$(a)$]
\item  For each tripotent $e$ in $E$, and each $y\in E_1(e)$, we have $$T(e+y)=T(e)+T(y);$$
\item  Suppose $e_1,e_2,$ and $g$ are tripotents in $E$ satisfying $e_1\perp e_2$, $e_1,e_2
    \in E_2(g)$, $g \in E_1(e_1)\cap E_1(e_2)$. Then, the identity
    $$T(\lambda_1 e_1+\mu g+\lambda_2 e_2)=\lambda_1 T(e_1)+\mu
    T(g)+\lambda_2 T(e_2),$$ holds for every $\lambda_1, \lambda_2, \mu \in \mathbb{C}$.
\end{enumerate}
\end{lemma}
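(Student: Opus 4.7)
The plan for (a) is to distinguish two cases. If $T(e) = 0$, Lemma \ref{l basic properties}(g) forces $T$ to vanish on $E_2(e) \oplus E_1(e)$, so both sides of the identity are zero. Assume $T(e) \neq 0$. Set $\Phi := \Phi_{e+y,y}$ and $w := \Phi(e) = T(e+y) - T(y)$, which is a tripotent as the image of the tripotent $e$ under a triple homomorphism. I aim to identify $w$ with $T(e)$ via Peirce analysis with respect to $T(e)$. Using the companion triple homomorphism $\Phi_{e+y,e}$ (which yields $T(e+y) - T(e) \in F_1(T(e))$ by Peirce preservation) together with Lemma \ref{l basic properties}(f) (which yields $T(y) \in F_1(T(e))$), I deduce $w - T(e) \in F_1(T(e))$. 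Hence $P_2(T(e))w = T(e) \neq 0$, so $w$ is a norm-one tripotent, and a direct application of Lemma \ref{l FriRu 1.6} with reference tripotent $T(e)$ forces $P_1(T(e))w = 0$; combined with the previous inclusion this gives $w = T(e)$, proving $T(e+y) = T(e) + T(y)$.

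For (b), I first apply (a) to the tripotent $e_i$ with $\mu g \in E_1(e_i)$ and combine with scalar homogeneity (Lemma \ref{l basic properties}(a)) to obtain $T(\lambda_i e_i + \mu g) = \lambda_i T(e_i) + \mu T(g)$ for $i = 1, 2$. Let $x := \lambda_1 e_1 + \mu g + \lambda_2 e_2$. For any $\alpha \in (0,1)$, since $\alpha e_2 \in E_0(e_1)$ with $\|\alpha e_2\| < 1$, Lemma \ref{l triple hom with z open unit ball} produces a triple homomorphism $\Theta := \Phi_{x, e_1 + \alpha e_2}$ with $\Theta(e_1) = T(e_1)$. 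Combined with $\Theta(e_1 + \alpha e_2) = T(e_1) + \alpha T(e_2)$ (Lemma \ref{l linearity on orthogonal tripotents} applied to the orthogonal tripotents $e_1 \perp e_2$), this forces $\Theta(e_2) = T(e_2)$. Expanding $\Theta(x)$ linearly now yields $T(x) = \lambda_1 T(e_1) + \mu \Theta(g) + \lambda_2 T(e_2)$.

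The remaining task is to establish $\Theta(g) = T(g)$. I plan to cross-reference with the auxiliary triple homomorphism $\Psi := \Phi_{x, \lambda_1 e_1 + \mu g}$, which satisfies $\Psi(\lambda_1 e_1 + \mu g) = \lambda_1 T(e_1) + \mu T(g)$ by the partial identity above; hence $T(x) = \lambda_1 T(e_1) + \mu T(g) + \lambda_2 \Psi(e_2)$. Equating the two expressions for $T(x)$ gives $\mu(\Theta(g) - T(g)) = \lambda_2(\Psi(e_2) - T(e_2))$. Both $\Theta(g)$ and $T(g)$ lie in $F_1(T(e_1)) \cap F_1(T(e_2)) \cap F_2(T(g))$ (by Peirce preservation of $\Theta$ and Lemma \ref{l basic properties}(f) for $T$), so does their difference. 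A refined Peirce analysis with respect to the reference tripotents $T(g), T(e_1), T(e_2)$, combined with a further application of Lemma \ref{l FriRu 1.6}, should force both sides to vanish.

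The step I expect to be the main obstacle is exactly this last normalization $\Theta(g) = T(g)$: the Peirce subspace $F_1(T(e_1)) \cap F_1(T(e_2)) \cap F_2(T(g))$ can in general accommodate many distinct tripotents, so the decisive rigidity must come from careful exploitation of the triangle relations $\{g,g,e_i\} = e_i$ and $\{e_i,e_i,g\} = \tfrac{1}{2} g$ preserved by $\Theta$, together with a sharp Friedman--Russo argument identifying norm-one elements by their Peirce-2 projections.
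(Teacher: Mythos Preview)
Your argument for part~$(a)$ is correct and coincides with the paper's: both expand $T(e+y)$ via $\Phi_{e+y,e}$ and $\Phi_{e+y,y}$, and then apply Lemma~\ref{l FriRu 1.6} to the tripotent $\Phi_{e+y,y}(e)$ with reference tripotent $T(e)$.

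For part~$(b)$ you stop exactly one line short, and the path you sketch for that last line is more involved than it needs to be. You have already obtained
\[
\mu\bigl(\Theta(g)-T(g)\bigr)=\lambda_2\bigl(\Psi(e_2)-T(e_2)\bigr),
\]
and you correctly observe that $\Theta(g)-T(g)\in F_1(T(e_2))$ (this follows from $\Theta(e_2)=T(e_2)$ and Lemma~\ref{l basic properties}$(f)$; your further claim that $\Theta(g)\in F_2(T(g))$ is neither justified nor needed). Hence the tripotent $\Psi(e_2)$ satisfies $P_2(T(e_2))\Psi(e_2)=T(e_2)$ and $P_0(T(e_2))\Psi(e_2)=0$. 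Assuming $T(e_2)\neq 0$ (the case $T(e_2)=0$ is immediate, since then $\Theta(e_2)=0$ forces $\Theta(g)=0=T(g)$), Lemma~\ref{l FriRu 1.6} gives $P_1(T(e_2))\Psi(e_2)=0$, so $\Psi(e_2)=T(e_2)$ and therefore $\Theta(g)=T(g)$. No ``refined Peirce analysis'' with three reference tripotents and no triangle identities are required; the single application of Friedman--Russo to $\Psi(e_2)$ with respect to $T(e_2)$ finishes the proof.

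For comparison, the paper's route in $(b)$ is slightly different and avoids Lemma~\ref{l triple hom with z open unit ball} altogether: it works with $\Phi_{z,e_2}$ (rather than your $\Theta=\Phi_{x,e_1+\alpha e_2}$), applies Lemma~\ref{l FriRu 1.6} to the tripotent $\Phi_{z,\lambda_1 e_1+\mu g}(e_2)$ to get $\Phi_{z,e_2}(g)=T(g)$, and then needs the extra identification $F_0(T(e_2))\cap F_2(T(e_1)+T(e_2))=F_2(T(e_1))$ from \cite[1.12]{Horn} to pin down $\Phi_{z,e_2}(e_1)=T(e_1)$. Your detour through Lemma~\ref{l triple hom with z open unit ball} buys you $\Theta(e_1)=T(e_1)$ and $\Theta(e_2)=T(e_2)$ at once, so that Horn's identity is not needed; the price is the auxiliary parameter $\alpha$, which in the end plays no role.
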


\begin{proof} $(a)$ Let $e$ be a tripotent in $E$, and let $y\in E_1(e)$.
By Lemma \ref{l basic properties}$(g)$, the desired statement is clear when $T(e) = 0$,
so we assume that $T(e) \neq 0$. In this case,
$$T(e+y)=\Phi_{e+y,e}(e+y)=T(e)+\Phi_{e+y,e}(y),$$ where
$\Phi_{e+y,e}(y)\in F_1(\Phi_{e+y,e}(e))=F_1(T(e))$, and we also have
$$T(e+y)=\Phi_{e+y,y}(e+y)=\Phi_{e+y,y}(e)+T(y),$$ with $\Phi_{e+y,y}(e)$
being a tripotent. Therefore, $$\Phi_{e+y,y}(e)=T(e)+ \Phi_{e+y,e}(y)-T(y),$$  with $\Phi_{e+y,e}(y)-T(y)\in F_1(T(e))$.
It follows from Lemma \ref{l FriRu 1.6} that $$0=P_1(T(e))(\Phi_{e+y,y}(e))=\Phi_{e+y,e}(y)-T(y),$$ witnessing the desired statement.\smallskip

$(b)$ We can assume that $\lambda_1, \lambda_2, \mu \neq 0$, otherwise the statement is clear from $(a)$ or from Lemma \ref{l linearity on orthogonal tripotents}. To simplify notation, we set $z=\lambda_1 e_1+\mu g+\lambda_2 e_2$.
Applying $(a)$ we get $$T(z)=\Phi_{z,\lambda_1 e_1+\mu g}(z)=\lambda_1 T(e_1)+\mu T(g)+\lambda_2 \Phi_{z,\lambda_1 e_1+\mu g}(e_2).$$ We also have
\begin{equation}\label{eq 1 lem tech 1 new} T(z)=\Phi_{z,e_2}(z)=\lambda_1 \Phi_{z,e_2}(e_1)+\mu \Phi_{z,e_2}(g)+\lambda_2 T(e_2).
\end{equation} Combining these two equalities we have
$$\Phi_{z,\lambda_1 e_1+\mu g}(e_2)=T(e_2)+\frac{\mu}{\lambda_2}(\Phi_{z,e_2}(g)-T(g))+\frac{\lambda_1}{\lambda_2}(\Phi_{z,e_2}(e_1)-T(e_1)),$$
where $\Phi_{z,e_2}(e_1)\in F_0 (\Phi_{z,e_2} (e_2)) = F_0 (T(e_2))$, $T(e_1)\in F_0 (T(e_2))$, $\Phi_{z,e_2}(g)\in F_1(\Phi_{z,e_2} (e_2))=F_1(T(e_2))$, and  $T(g) \in F_1(T(e_2))$ (cf. Lemma \ref{l basic properties}$(g)$). Lemma \ref{l FriRu 1.6} implies that $T(g)=\Phi_{z,e_2}(g),$ and hence \eqref{eq 1 lem tech 1 new} writes in the form $$T(z)=\Phi_{z,e_2}(z)=\lambda_1 \Phi_{z,e_2}(e_1)+\mu T(g)+\lambda_2 T(e_2).$$ The last identity implies that $P_2 (T(e_2)) T(z) = \lambda_2 T(e_2)$, $P_1 (T(e_2)) T(z) = \mu T(g)$, and $P_0 (T(e_2)) T(z) = \lambda_1 \Phi_{z,e_2}(e_1)$.\smallskip

The identity $$T(z) = \Phi_{z,e_1} (z) = \lambda_1 T(e_1) + \mu \Phi_{z,e_1} (g)+ \lambda_2 \Phi_{z,e_1} (e_2),$$ shows that
$P_2 (T(e_1)) T(z) = \lambda_1 T(e_1).$\smallskip

Finally, having in mind that $T(z) \in F_2 (T(e_1+e_2))=F_2 (T(e_1)+T(e_2))$, and $F_0(T(e_2))\cap F_2(T(e_1+e_2))=F_2(T(e_1))$ (cf. \cite[1.12]{Horn}), we have $\Phi_{z,e_2}(e_1)=T(e_1)$.
\end{proof}

\begin{lemma}\label{l 2-local and Peirce arithmetics}
Let $T: E\to F$ be a (not necessarily linear) 2-local triple homomorphism from a JBW$^*$-triple into a JB$^*$-triple, and let $e$ be a tripotent in $E$.
Then the following statements hold:
\begin{enumerate}[$(a)$]
\item $T(e+y+z)=T(e)+T(y)+T(z)$, for every $y\in E_1(e)$, and every $z\in E_0(e)$ with $\|z\|<1;$
\item $T(y+z)=T(y)+T(z)$, for every $y\in E_1(e)$, and every $z\in E_0(e)$;
\item $T(e+y+z)=T(e)+T(y)+T(z)$, for every $y\in E_1(e)$, and every $z\in E_0(e)$. Consequently, $T(\lambda e+y+z)=\lambda T(e)+T(y)+T(z)$, for every $y\in E_1(e)$, every $z\in E_0(e)$, and every $\lambda\in \mathbb{C}$.
\end{enumerate}
\end{lemma}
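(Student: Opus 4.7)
My plan is to prove the three parts in order, each time combining a carefully chosen pair of $2$-locals with the rigidity provided by Lemma~\ref{l FriRu 1.6}. For~(a), the main input is Lemma~\ref{l triple hom with z open unit ball} together with Proposition~\ref{p 2-local triple homomorphisms are OA}; (b) bootstraps from~(a); and (c) needs the additional ingredient of a scaling/rigidity trick.

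For~(a), set $\Phi_1 := \Phi_{e+y+z,\, e+z}$ and $\Phi_2 := \Phi_{e+y+z,\, y}$. Lemma~\ref{l triple hom with z open unit ball} applied with $w = e+y+z$ (the hypothesis $\|z\|<1$ is essential here) gives $\Phi_1(e) = T(e)$ and $\Phi_1(E_j(e))\subseteq F_j(T(e))$; combined with $T(e+z) = T(e) + T(z)$ (from Proposition~\ref{p 2-local triple homomorphisms are OA}) this produces $\Phi_1(z) = T(z)$ and
\[
T(e+y+z) \,=\, T(e) + \Phi_1(y) + T(z),
\]
which is already the Peirce decomposition of $T(e+y+z)$ with respect to $T(e)$. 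On the other hand $T(e+y+z) = \Phi_2(e+z) + T(y)$, and since $\Phi_2(e)\perp\Phi_2(z)$ and $\|\Phi_2(z)\|\leq\|z\|<1$, we have $\|\Phi_2(e+z)\|\leq 1$. Equating the two expressions and applying $P_2(T(e))$ forces $P_2(T(e))\Phi_2(e+z) = T(e)$, so (in the case $T(e)\neq 0$) $\|\Phi_2(e+z)\|=1$, and Lemma~\ref{l FriRu 1.6} yields $P_1(T(e))\Phi_2(e+z) = 0$, i.e.\ $\Phi_1(y) = T(y)$. The case $T(e)=0$ follows at once from Lemma~\ref{l basic properties}$(g)$.

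For~(b) when $\|z\|<1$, apply Lemma~\ref{l triple hom with z open unit ball} to $\Phi := \Phi_{y+z,\, e+z}$ to get $T(y+z) = \Phi(y) + T(z)$ with $\Phi(y)\in F_1(T(e))$. Using the identity from~(a), the $2$-local $\Phi'' := \Phi_{y+z,\, e+y+z}$ satisfies $\Phi''(e) = T(e) + (T(y)-\Phi(y))$, with the correction in $F_1(T(e))$; since $\Phi''(e)$ is a tripotent whose Peirce-$2$ part is $T(e)$, Lemma~\ref{l FriRu 1.6} forces $T(y) = \Phi(y)$. For general $z$, apply this to $\lambda y, \lambda z$ with any $\lambda < 1/\|z\|$ and divide by $\lambda$ using $1$-homogeneity (Lemma~\ref{l basic properties}$(a)$).

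For~(c), set $\Phi_1 := \Phi_{e+y+z,\, y+z}$ and $\Phi_2 := \Phi_{e+y+z,\, e}$. By~(b), $\Phi_1(y+z) = T(y)+T(z)$; equating the two expressions for $T(e+y+z)$ and repeating the Peirce argument of~(a) yields, when $T(e)\neq 0$, $\Phi_1(e) = T(e)+v$ with $v := P_0(T(e))\Phi_1(e)$ a tripotent orthogonal to $T(e)$, together with $\Phi_2(y)=T(y)$ and $\Phi_2(z)=T(z)+v$; hence $T(e+y+z) = T(e)+T(y)+T(z)+v$. To force $v=0$, combine~(a) with $1$-homogeneity to get $T(\lambda e + y + z) = \lambda T(e) + T(y) + T(z)$ for every real $\lambda > \|z\|$; linearity of $\Phi_\lambda := \Phi_{e+y+z,\, \lambda e+y+z}$ then gives $\Phi_\lambda(e) = T(e) - v/(\lambda-1)$, and since $T(e)\perp v$ are both tripotents, the identity
\[
(T(e)+\beta v)^{[3]} \,=\, T(e) + |\beta|^2\beta\,v
\]
forces $\beta(|\beta|^2-1)v = 0$ with $\beta = -1/(\lambda-1)$; choosing any real $\lambda > \|z\|$ with $\lambda\notin\{0,2\}$ ensures $|\beta|\neq 1$, hence $v=0$. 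The case $T(e)=0$ is analogous, using Lemma~\ref{l basic properties}$(g)$ and the same scaling trick, and the final clause follows from the main identity together with Lemma~\ref{l basic properties}$(a)$. The main obstacle is this last step: since Lemma~\ref{l triple hom with z open unit ball} is unavailable for $\|z\|\geq 1$, the~(a)-style argument cannot be repeated, and the rigidity identity above combined with the freedom to move $\lambda$ off $\{0,2\}$ is the crucial new ingredient.
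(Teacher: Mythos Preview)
Your proofs of parts~$(a)$ and~$(b)$ follow the same line as the paper's: the same pairs of $2$-local homomorphisms, the same invocation of Lemma~\ref{l triple hom with z open unit ball} under $\|z\|<1$, and the same use of Lemma~\ref{l FriRu 1.6} to kill the Peirce-$1$ correction. The organisation in~$(b)$ is slightly streamlined (you obtain $T(y+z)=\Phi(y)+T(z)$ first and then force $\Phi(y)=T(y)$ in one stroke, whereas the paper identifies $P_1(T(e))T(y+z)$ and $P_0(T(e))T(y+z)$ separately), but the content is the same.

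Part~$(c)$ is where the two arguments diverge. The paper introduces the one-parameter family $w_\lambda=e+y+\lambda z$ for $\lambda\in[0,1]$, shows (after using Lemma~\ref{l FriRu 1.6} to dispose of the Peirce-$1$ part) that $P_0(T(e))T(w_\lambda)-\lambda T(z)$ is a tripotent for every $\lambda$, and then runs a connectedness argument: the function $\lambda\mapsto \|P_0(T(e))T(w_\lambda)-\lambda T(z)\|$ is continuous into $\{0,1\}$ and vanishes at $\lambda=0$, hence at $\lambda=1$. You instead isolate the obstruction as a single tripotent $v\in F_0(T(e))$ with $T(e+y+z)=T(e)+T(y)+T(z)+v$, use~$(a)$ plus homogeneity to compute $T(\lambda e+y+z)$ exactly for large real $\lambda$, and then read off from $\Phi_\lambda=\Phi_{e+y+z,\lambda e+y+z}$ that $\Phi_\lambda(e)=T(e)-v/(\lambda-1)$; the tripotent equation $(T(e)+\beta v)^{[3]}=T(e)+|\beta|^2\beta v$ forces $v=0$ for any $\lambda>\max(\|z\|,2)$. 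This is a clean algebraic substitute for the paper's continuity trick; it avoids the $[0,1]$-deformation entirely and makes transparent why the obstruction must vanish. One small remark: you should explicitly require $\lambda\neq 1$ (needed to divide by $\lambda-1$), not only $\lambda\notin\{0,2\}$; taking $\lambda>\max(\|z\|,2)$ handles all three exclusions at once.
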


\begin{proof} Throughout the proof we set $w=e+y+z$.\smallskip

$(a)$ We assume first that $T(e)=0$. By Lemma \ref{l basic properties}$(g)$, $T(y)=0$. By Lemma \ref{l triple hom with z open unit ball}, $\Phi_{w,e+z} (e) = T(e)=0$ and hence $\Phi_{w,e+z}(y) \in F_1 (\Phi_{w,e+z}(e)) =\{0\}$. If we write $$T(e+y+z) = T(w)=\Phi_{w,e+z}(w)=T(e+z)+\Phi_{w,e+z}(y)$$ $$=T(e+z) =\hbox{(by Proposition \ref{p 2-local triple homomorphisms are OA})} = T(e) + T(z) =0.$$

\noindent Suppose now that $T(e)\neq 0$. Proposition \ref{p 2-local triple homomorphisms are OA} implies that
$$T(w)=\Phi_{w,e+z}(w)=T(e+z)+\Phi_{w,e+z}(y)=T(e)+T(z)+\Phi_{w,e+z}(y).$$ By Lemma \ref{l triple hom with z open unit ball}, $\Phi_{w,e+z} (e) = T(e)$, and
in particular $\Phi_{w,e+z}(y)\in F_1(T(e))$. We also have
$$T(w)=\Phi_{w,y}(w)=T(y)+\Phi_{w,y}(e+z),$$ and hence $$\Phi_{w,y}(e+z)=T(e)+ \Phi_{w,e+z}(y)-T(y)+
T(z).$$ Having in mind that $\|\Phi_{w,y}(e+z)\|\leq 1$, Lemma \ref{l FriRu 1.6} implies that
$$0=P_1(T(e))\Phi_{w,y}(e+z)=\Phi_{w,e+z}(y)-T(y).$$

$(b)$ Since $T$ is 1-homogeneous, we may assume without loss of generality that $\|z\|<1$. As in the previous case, let us assume that $T(e)=0$. Under these assumptions, Lemma \ref{l triple hom with z open unit ball} implies that $\Phi_{y+z,e+z} (e+y+z-e) (e) = T(e) = 0$ and hence $\Phi_{y+z,e+z} (E_2 (e)\oplus E_1 (e))= \{0\}.$ Then $$T(y+z) = \Phi_{y+z,e+z} (e+y+z-e) = T(e+z) + \Phi_{y+z,e+z} (y-e)$$
$$= T(e+z) = \hbox{(by Proposition \ref{p 2-local triple homomorphisms are OA})} = T(e) +T(z) = T(z).$$

\noindent We consider now the case $T(e)\neq 0$. Since we are assuming $\|z\|<1$, it follows from $(a)$ that
$$T(y+z)=\Phi_{y+z,w}(e+y+z-e)=\Phi_{y+z,w}(e+y+z)-\Phi_{y+z,w}(e)$$ $$=T(e)+T(y)+T(z)-\Phi_{y+z,w}(e).$$
Considering that $T(y+z)\in F_1(T(e))+F_0(T(e))$ (see Lemma \ref{l basic properties}$(f)$) we have $P_2(T(e))\Phi_{y+z,w}(e)=T(e)$.
Lemma \ref{l FriRu 1.6}, applied in the identity $\Phi_{y+z,w}(e) = T(e)+T(y)- T(y+z) +T(z)$, shows that $P_1(T(e))T(y+z)=T(y)$. \smallskip

By Corollary \ref{p 2-local triple homomorphisms are OA}, we get $$T(y+z)=\Phi_{y+z,e+z}(e+z)+\Phi_{y+z,e+z}(y-e)=T(e+z)+\Phi_{y+z,e+z}(y-e)$$
    $$=T(e)+T(z)+\Phi_{y+z,e+z}(y)-\Phi_{y+z,e+z}(e).$$

By assumptions $\|z\|<1$. Thus, applying Lemma \ref{l triple hom with z open unit ball} we show $\Phi_{y+z,e+z}(e)=T(e).$ Therefore, $\Phi_{y+z,e+z}(y)\in F_1(T(e))$ and $P_0(T(e))T(y+z)=T(z)$, which proves that $$T(y+z) = P_1(T(e))T(y+z)+ P_0(T(e))T(y+z)=T(y)+T(z).$$

$(c)$ We begin with the case $T(e) \neq 0$. For each real number $\lambda\in [0,1]$, we denote $w_{\lambda}:=e+y+\lambda z$.
By the assumptions on $T$
$$T(w_{\lambda})=\Phi_{w_{\lambda},e}(w_{\lambda})=T(e)+\Phi_{w_{\lambda},e}(y)+\lambda \Phi_{w_{\lambda},e}(z),$$
where $\Phi_{w_{\lambda},e}(y)\in F_1(T(e)),$ and $\Phi_{w_{\lambda},e}(z)\in F_0(T(e))$. Applying $(b)$ we deduce that
$$T(w_{\lambda})=\Phi_{w_{\lambda},y+\lambda z}(w_{\lambda})=T(y)+\lambda
    T(z)+\Phi_{w_{\lambda},y+\lambda z}(e).$$
The above identities show that $$\Phi_{w_{\lambda},y+\lambda z}(e)=T(e)+(\Phi_{w_{\lambda},e}(y)-T(y))+\lambda (\Phi_{w_{\lambda},e}(z)-T(z)),$$ and Lemma \ref{l FriRu 1.6} applies to assure that $\Phi_{w_{\lambda},e}(y)=T(y)$. Therefore,
$$\Phi_{w_{\lambda},y+\lambda z}(e)=T(e)+\lambda (\Phi_{w_{\lambda},e}(z)-T(z)).$$
Since $\Phi_{w_{\lambda},y+\lambda z}(e)$ is a tripotent, we deduce that $$P_0(T(e))\Phi_{w_{\lambda},y+\lambda z}(e)=\lambda
    (\Phi_{w_{\lambda},e}(z)-T(z))=P_0(T(e))(T(w_{\lambda}))-\lambda T(z)$$ is a tripotent.\smallskip

Finally the mapping $f:[0,1]\to \{0,1\}$ given by
$$f(\lambda)=\| P_0(T(e))(T(w_{\lambda}))-\lambda T(z)\|$$ is (norm) continuous and $f(0)=0$,
then $f(\lambda)=0$ for every $\lambda \in [0,1]$, and hence $\Phi_{w_{1},e}(z)=T(z),$ which gives the desired statement.\smallskip

Suppose, finally, that $T(e)=0$. Lemma \ref{l basic properties}$(g)$ implies that $T(y) =0$. Let us observe that $\Phi_{w_{\lambda},e}(e)=T(e) =0$.
The identities
$$T(w_{\lambda})=\Phi_{w_{\lambda},e}(w_{\lambda})=T(e)+\Phi_{w_{\lambda},e}(y)+\lambda \Phi_{w_{\lambda},e}(z) = \lambda \Phi_{w_{\lambda},e}(z),$$
$$T(w_{\lambda})=\Phi_{w_{\lambda},y+\lambda z}(w_{\lambda})=\Phi_{w_{\lambda},y+\lambda z} (e)+ T(y)+\lambda T(z) = \Phi_{w_{\lambda},y+\lambda z}(e)+ \lambda T(z),$$ show that $$\Phi_{w_{\lambda},y+\lambda z}(e)=  \lambda \Phi_{w_{\lambda},e}(z) - \lambda T(z)  = T(w_{\lambda}) -\lambda T(z), $$ for every $\lambda\in [0,1]$. Since, for every $0\leq \lambda \leq 1$, $\Phi_{w_{\lambda},y+\lambda z}(e)$ is a tripotent, the function $f: [0,1] \to \mathbb{R}$, $f(\lambda):= \left\| \Phi_{w_{\lambda},y+\lambda z}(e) \right\| = \left\| T(w_{\lambda}) -\lambda T(z) \right\|$ is continuous and takes only the values $0$ and $1$. Since $f(0)=0$, we conclude that $f(\lambda) = 0$, for every $\lambda\in [0,1]$, which proves $T(e+y+z) - T(z) = 0$.
\end{proof}

We recall that, given a conjugation (conjugate linear isometry of period 2), $\sigma$, on a
complex Hilbert space $H$ with dim$(H)=n\in \mathbb{N}\cup \{\infty\}$, the mapping $x \mapsto x^{t}:=\sigma x^* \sigma$
defines a linear involution on $L(H)$. The type-3 Cartan
factor, denoted by $III_{n}$, is the subtriple of $L(H)$ formed by
the $t$-symmetric operators. Following standard notation, $S_2 (\mathbb{C})$ will denote $III_{2}$.

\begin{corollary}\label{c 2-local triple hom on M2 and S2}
Let $F$ be a JB$^*$-triple and let $T: C \to F$ be a (not
necessarily linear) 2-local triple-homomorphism, where $C$ is $M_2
(\mathbb{C})$ or $S_2 (\mathbb{C})$. Then $T$ is linear and a
triple homomorphism.
\end{corollary}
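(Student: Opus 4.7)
The plan is to identify, in each of the two cases, a convenient collection of tripotents spanning $C$ linearly, and then to invoke the almost-linearity results already established in Lemmas \ref{l linearity on orthogonal tripotents}, \ref{l technical on tripotents} and \ref{l 2-local and Peirce arithmetics}. Both $M_2(\CC)$ and $S_2(\CC)$ are finite-dimensional, hence reflexive, hence JBW$^*$-triples, so every result proved so far applies.

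For $C=S_2(\CC)$ I would take $e_1=e_{11}$, $e_2=e_{22}$ and $g=e_{12}+e_{21}$: three tripotents that form a $\CC$-linear basis of $S_2(\CC)$. Routine triple-product computations in the ambient $M_2(\CC)$ (and hence in $S_2(\CC)$, since the latter is a JB$^*$-subtriple) give $e_1\perp e_2$, together with $\J gg{e_j}=e_j$ for $j=1,2$ (so $e_1,e_2\in E_2(g)$) and $\J{e_j}{e_j}{g}=\tfrac12 g$ (so $g\in E_1(e_1)\cap E_1(e_2)$). The hypotheses of Lemma \ref{l technical on tripotents}$(b)$ are therefore met, yielding
\[
T(\lambda_1 e_1+\mu g+\lambda_2 e_2)=\lambda_1 T(e_1)+\mu T(g)+\lambda_2 T(e_2)
\]
for every $\lambda_1,\mu,\lambda_2\in\CC$. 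This is $\CC$-linear in the coordinates of a generic element of $S_2(\CC)$, so $T$ is linear, and Lemma \ref{l basic properties}$(c)$ promotes it to a triple homomorphism.

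For $C=M_2(\CC)$ I would proceed in two steps. Write an arbitrary $x\in M_2(\CC)$ in the matrix-unit basis as $x=\alpha e_{11}+y+z$, with $y=\gamma e_{12}+\delta e_{21}\in E_1(e_{11})$ and $z=\beta e_{22}\in E_0(e_{11})$. Lemma \ref{l 2-local and Peirce arithmetics}$(c)$ then gives $T(x)=\alpha T(e_{11})+T(y)+\beta T(e_{22})$. A direct computation shows that $\J{e_{12}}{e_{12}}{e_{21}}=\tfrac12(e_{12}e_{21}e_{21}+e_{21}e_{21}e_{12})=0$, so $e_{12}\perp e_{21}$, and Lemma \ref{l linearity on orthogonal tripotents}$(b)$ provides $T(y)=\gamma T(e_{12})+\delta T(e_{21})$. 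Substituting back produces
\[
T(x)=\alpha T(e_{11})+\gamma T(e_{12})+\delta T(e_{21})+\beta T(e_{22}),
\]
which is $\CC$-linear in the entries of $x$. Thus $T$ is linear, and Lemma \ref{l basic properties}$(c)$ again delivers the triple-homomorphism property. The only step requiring real attention is selecting, in each case, a tripotent basis whose Peirce relations exactly match the hypotheses of the invoked lemma; apart from that, the argument is a routine bookkeeping exercise and no substantial obstacle is anticipated.
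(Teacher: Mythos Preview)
Your proof is correct and follows essentially the same route as the paper: both treatments use the matrix-unit tripotents $\{e_{11},e_{22},e_{12},e_{21}\}$ for $M_2(\CC)$ and $\{e_{11},e_{22},e_{12}+e_{21}\}$ for $S_2(\CC)$, and then read off linearity in coordinates from the Peirce-arithmetic lemmas. The only cosmetic difference is that for $S_2(\CC)$ you appeal to Lemma~\ref{l technical on tripotents}$(b)$, whereas the paper applies Lemma~\ref{l 2-local and Peirce arithmetics}$(c)$ (with $e=e_{11}$, $y=\mu(e_{12}+e_{21})$, $z=\lambda_2 e_{22}$); either lemma delivers the same identity on the same basis, so the arguments are interchangeable.
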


\begin{proof} Suppose first that $C= M_2(\mathbb{C})$. We set $e_1=\left(
                                                                   \begin{array}{cc}
                                                                     1 & 0 \\
                                                                     0 & 0 \\
                                                                   \end{array}
                                                                 \right)$,  $e_2=\left(
                                                                   \begin{array}{cc}
                                                                     0 & 0 \\
                                                                     0 & 1 \\
                                                                   \end{array}
                                                                 \right)$, $y_1=\left(
                                                                   \begin{array}{cc}
                                                                     0 & 1 \\
                                                                     0 & 0 \\
                                                                   \end{array}
                                                                 \right)$, and $y_2=\left(
                                                                   \begin{array}{cc}
                                                                     0 & 0 \\
                                                                     1 & 0 \\
                                                                   \end{array}
                                                                 \right)$. Lemma \ref{l 2-local and Peirce arithmetics}$(c)$ implies that
$$T(\lambda_1 e_1 + \mu_1 y_1 + \mu_2 y_2 + \lambda_2 e_2) = \lambda_1 T(e_1) + T(\mu_1 y_1 + \mu_2 y_2) + \lambda_2 T(e_2)$$
$$=\hbox{(Proposition \ref{p 2-local triple homomorphisms are OA} applied to $y_1\perp y_2$)} =$$ $$= \lambda_1 T(e_1) + \mu_1 T(y_1) + \mu_2 T(y_2) + \lambda_2 T(e_2).$$ The linearity follows from the fact that $\{e_1,y_1,y_2,e_2\}$ is a basis of $M_2 (\mathbb{C})$.\smallskip

For the statement concerning $S_2 (\mathbb{C})$, we observe that we can assume that $\sigma: H =\ell_2^2 \to H=\ell_2^2$ is the mapping given by $\sigma (t_1,t_2) =  (\overline{t}_1,\overline{t}_2).$ Considering $e_1=\left(\begin{array}{cc}
                                                                     1 & 0 \\
                                                                     0 & 0 \\
                                                                   \end{array}
                                                                 \right)$,  $e_2=\left(
                                                                   \begin{array}{cc}
                                                                     0 & 0 \\
                                                                     0 & 1 \\
                                                                   \end{array}
                                                                 \right)$, and $y=\left(
                                                                   \begin{array}{cc}
                                                                     0 & 1 \\
                                                                     1 & 0 \\
                                                                   \end{array}
                                                                 \right)$, Lemma \ref{l 2-local and Peirce arithmetics}$(c)$ implies that
$$T(\lambda_1 e_1 + \mu y + \lambda_2 e_1) =  \lambda_1 T(e_1) + \mu T(y) + \lambda_2 T(e_2),$$ which proves that $T$ is linear.
\end{proof}

\begin{proposition}\label{p 2-local in Peirce-2 Peirce-1 arithmetics}
Let $T: E\to F$ be a (not necessarily linear) 2-local triple homomorphism from a JBW$^*$-triple into a JB$^*$-triple, and let $e$ be a tripotent in $E$.
Then $T(x+y)=T(x)+T(y),$ for all $x\in E_2(e)$, $y\in E_1(e)$.
\end{proposition}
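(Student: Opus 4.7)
The plan is to reduce, via a density argument, to the case where $x$ is a finite real linear combination of mutually orthogonal tripotents lying in $E_2(e)$, establish additivity in that discrete setting by iterating Lemma \ref{l 2-local and Peirce arithmetics}, and then pass to the limit using the $1$-Lipschitz property of $T$ (Lemma \ref{l basic properties}$(e)$).

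The first step is a Peirce arithmetic observation: for every tripotent $v\in E_2(e)$ and every $y\in E_1(e)$, one has $P_2(v)y=0$. Indeed, the Peirce rule $\{E_2(e),E_2(e),E_1(e)\}\subseteq E_1(e)$ forces $P_2(v)y$, as a polynomial in $L(v,v)$ applied to $y$, to remain in $E_1(e)$. On the other hand, the rules $\{E_2(e),E_0(e),E\}=\{0\}$ and $\{E_2(e),E_1(e),E_2(e)\}\subseteq E_3(e)=\{0\}$ show that the operator $Q(v)\colon x\mapsto\{v,x,v\}$ annihilates $E_0(e)+E_1(e)$ and maps $E_2(e)$ into itself, so $E_2(v)=Q(v)(E)\subseteq E_2(e)$. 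Thus $P_2(v)y\in E_1(e)\cap E_2(e)=\{0\}$. Combining Lemma \ref{l 2-local and Peirce arithmetics}$(b)$ and $(c)$ now yields a \emph{generalized base case}: for every tripotent $v\in E$, every $\lambda\in\mathbb{C}$, and every $w\in E_1(v)+E_0(v)$, we have $T(\lambda v+w)=\lambda T(v)+T(w)$.

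Next, for mutually orthogonal tripotents $e_1,\dots,e_n\in E_2(e)$, scalars $\lambda_j$, and $y\in E_1(e)$, I would apply the generalized base case successively to $e_1,\dots,e_n$. At step $k$, the remainder $\sum_{j>k}\lambda_j e_j+y$ lies in $E_1(e_k)+E_0(e_k)$: the tripotents $e_j$ with $j>k$ sit in $E_0(e_k)$ by mutual orthogonality, and $y$ has vanishing Peirce-$2$ component with respect to $e_k$ by the previous paragraph. This produces
\[
T\Bigl(\textstyle\sum_{j}\lambda_j e_j+y\Bigr)=\sum_{j}\lambda_j T(e_j)+T(y),
\]
and Lemma \ref{l linearity on orthogonal tripotents}$(b)$ identifies the right-hand sum with $T(\sum_j\lambda_j e_j)$.

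To finish, fix $\varepsilon>0$. Since $x\in E_2(r(x))\subseteq E_2(e)$, Horn's Lemma (\cite[Lemma 3.11]{Horn}, applied exactly as in the proof of Proposition \ref{p 2-local triple homomorphisms are OA}) furnishes $x_\varepsilon=\sum_{j}\lambda_j e_j$ with $\lambda_j\in\mathbb{R}$ and $\{e_j\}$ mutually orthogonal tripotents in $E_2(r(x))\subseteq E_2(e)$, satisfying $\|x-x_\varepsilon\|<\varepsilon/2$. The $1$-Lipschitz property of $T$ combined with the discrete additivity just obtained gives $\|T(x+y)-T(x)-T(y)\|\leq 2\|x-x_\varepsilon\|<\varepsilon$, and letting $\varepsilon\to 0$ concludes the proof. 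The delicate point is the Peirce fact $P_2(v)y=0$: without the containment $E_2(v)\subseteq E_2(e)$, the iteration would stall at the very first step, since the remainder after stripping off $e_1$ could carry a nontrivial Peirce-$2$ component with respect to $e_1$, and none of the lemmas established so far handles such a configuration.
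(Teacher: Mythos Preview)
Your proof is correct and follows essentially the same route as the paper's: both approximate $x$ by an algebraic element via Horn's lemma, use the Peirce fact $P_2(v)y=0$ for $v\in E_2(e)$ and $y\in E_1(e)$, peel off the tripotents one at a time via Lemma~\ref{l 2-local and Peirce arithmetics}, and invoke Lemma~\ref{l linearity on orthogonal tripotents}$(b)$ and the $1$-Lipschitz property of $T$ to conclude. Your organization is marginally cleaner: by packaging parts $(b)$ and $(c)$ of Lemma~\ref{l 2-local and Peirce arithmetics} into the single statement $T(\lambda v+w)=\lambda T(v)+T(w)$ for $w\in E_1(v)+E_0(v)$, your iteration only requires checking that the remainder has vanishing Peirce-$2$ component with respect to the current $e_k$, whereas the paper's formal induction (whose hypothesis is phrased in terms of $E_2(e)$ and $E_1(e)$) must additionally verify that the $y$-part of the remainder stays in $E_1(e)$, which costs an appeal to the commutativity of Peirce projections \cite[Lemma 1.10]{FriRu85}.
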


\begin{proof} Let us observe that by Lemma \ref{l basic properties}$(g)$, we may assume that $T(e)\neq 0$. By the norm density of algebraic elements in $E_2(e)$ (cf. \cite[lemma 3.11]{Horn}), together with the continuity of $T$, it is enough to prove that, for every algebraic element $a$ in $E_2(e)$ (i.e. $\displaystyle a = \sum_{k= 1}^{m} \lambda_k e_k $, where $\lambda_k\in \mathbb{R}$, and  $e_1,\ldots, e_{m_1}$ are mutually orthogonal tripotents in $E_2 (e)$), we have $T(a+y)=T(a)+T(y)$. We shall prove this statement by induction on the
number $m$ of mutually orthogonal tripotents whose linear combination coincides with $a$.\smallskip

For the case $m=1$, we may assume that $a=\lambda_1 e_1 \in E_2(e)$ with $\lambda_1\neq 0$.
Since $y\in E_1(e)$, it follows from Peirce rules that $y$ writes in the form $y=y_1+y_0,$ where $y_k=P_k(e_1)y$,
$k=1,0$. By Lemma \ref{l 2-local and Peirce arithmetics}$(c)$, $$T(a+y)=T(\lambda_1 e_1+y_1+y_0)=T(\lambda_1
e_1)+T(y_1)+T(y_0),$$ and by Lemma \ref{l 2-local and Peirce
arithmetics}$(b)$, $T(y_1)+T(y_0)=T(y_1+y_0)=T(y)$, then $T(a+y)=T(a)+T(y)$.\smallskip

Suppose, by the induction hypothesis, that for every algebraic element $b$ in $E_2(e)$
which is a linear combination of $m$ mutually orthogonal tripotents in $E_2(e)$, we have $$T(b+y)=T(b)+T(y),$$ for
every $y\in E_1(e)$. Let $\displaystyle a=\sum_{i=1}^{m+1} \lambda_i e_i$ be an algebraic element in
$E_2(e)$, and denote by $f$ the tripotent $\displaystyle
\sum_{i=1}^{m+1} e_i$. Applying the Peirce decompositions of $a+y$ associated with $f$ and $e_1$, we have
$a+y=a+y_1+y_0,$ where $y_1=P_1(f)y$ and $y_0=P_0(f)y$, and
$$a+y=\lambda_1 e_1 + P_1(e_1)y_1+\left(\sum_{i=2}^{m+1} \lambda_i
e_i+P_0(e_1)y_1+y_0\right),$$ where $\displaystyle \left(\sum_{i=2}^{m+1}
\lambda_i e_i+P_0(e_1)y_1+y_0\right)\in E_0(e_1)$.\smallskip

Lemma \ref{l 2-local and Peirce arithmetics}$(c)$ implies that
\begin{equation}\label{eq 1 in prop Peirce 2 and 1} T(a+y)=T(\lambda_1 e_1)+T(P_1(e_1)y_1)+T\left(\sum_{i=2}^{m+1} \lambda_i
e_i+P_0(e_1)y_1+y_0\right).
\end{equation} We observe that $e_1, f\in E_2 (e)$, therefore $P_j (e_1) P_k (e) = P_k(e) P_j (e_1)$ and $P_j (f) P_k (e) = P_k(e) P_j (f)$, for every $j,k\in \{0,1,2\}$ (cf. \cite[Lemma 1.10]{FriRu85}). The induction hypothesis, applied to
$\displaystyle \sum_{i=2}^{m+1} \lambda_i e_i\in E_2 (e)$ and $P_0(e_1)y_1 +y_0 = P_0(e_1) P_1 (f) (y) + P_0(f) (y) = P_0(e_1) P_1 (f) P_1 (e)(y) + P_0(f) P_1 (e)(y)= P_1(e) (P_0(e_1) P_1 (f) (y) + P_0(f) (y))\in E_1(e),$ assures that \begin{equation}\label{eq 2 in prop Peirce 2 and 1}T\left(\sum_{i=2}^{m+1} \lambda_i
e_i+P_0(e_1)y_1+y_0\right)=T\left(\sum_{i=2}^{m+1} \lambda_i
e_i\right)+T(P_0(e_1)y_1+y_0).\end{equation}

Finally, by Lemma \ref{l linearity on orthogonal tripotents} \begin{equation}\label{eq 3 in prop Peirce 2 and 1} \displaystyle T(\lambda_1 e_1)+T\left(\sum_{i=2}^{m+1} \lambda_i
e_i\right)=T(a).
 \end{equation} Since $P_1(e_1)y_1 \in E_1(e_1),$ $ P_0(e_1)y_1+y_0\in E_0(e_1),$ Lemma \ref{l 2-local and
Peirce arithmetics} $(b)$ implies that
$$T(P_1(e_1)y_1)+T(P_0(e_1)y_1+y_0)=T(P_1(e_1)y_1+P_0(e_1)y_1+y_0)$$ $$=T(y_1+y_0)=T(y),$$ which combined with \eqref{eq 1 in prop Peirce 2 and 1}, \eqref{eq 2 in prop Peirce 2 and 1}, and \eqref{eq 3 in prop Peirce 2 and 1} prove $T(a+y)=T(a)+T(y).$
\end{proof}

Our series of technical results on 2-local triple homomorphisms concludes with an strengthened version of Lemma \ref{l 2-local and Peirce arithmetics}.

\begin{lemma}\label{l 2-local and Peirce arithmetics II}
Let $T: E\to F$ be a (not necessarily linear) 2-local triple homomorphism from a JBW$^*$-triple into a JB$^*$-triple, and let $e$ be a tripotent in $E$.
Then the following statements hold:
\begin{enumerate}[$(a)$]
\item $T(e+i h+y+z)=T(e)+ i T(h)+T(y)+T(z)$, for every $h\in E_2 (e)_{sa}$, $y\in E_1(e)$, and every $z\in E_0(e)$ with $\|z\|<1;$
\item $T(i h +y+z)=i T(h)+T(y)+T(z)$, for every $h\in E_2 (e)_{sa}$, $y\in E_1(e)$, and every $z\in E_0(e)$;
\item $T(e+ i h+y+z)=T(e)+i T(h)+T(y)+T(z)$, for every $h\in E_2 (e)_{sa}$, $y\in E_1(e)$, and every $z\in E_0(e)$. Consequently, $$T(\lambda e+ i h+y+z)=\lambda T(e)+i T(h)+T(y)+T(z),$$ for every $\lambda\in \mathbb{C}$, $h\in E_2 (e)_{sa}$, $y\in E_1(e)$, and every $z\in E_0(e)$.
\end{enumerate}
\end{lemma}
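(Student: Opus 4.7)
My plan is to parallel the structure of Lemma \ref{l 2-local and Peirce arithmetics}, handling the new summand $ih$ via a direct norm computation in the JB$^*$-algebra $F_2(T(e))$. As in that lemma, the case $T(e)=0$ is dispatched by Lemma \ref{l basic properties}$(g)$ and Lemma \ref{l triple hom with z open unit ball}, so I focus throughout on $T(e)\neq 0$.

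The key preliminary identity is $T(e+ih)=T(e)+iT(h)$ for every $h\in E_2(e)_{sa}$. For an algebraic element $h=\sum_{k=1}^n \alpha_k p_k$ of $E_2(e)_{sa}$, adjoining $p_0=e-\sum_{k=1}^n p_k$ with $\alpha_0=0$ gives $e+ih=\sum_{k=0}^n(1+i\alpha_k)p_k$, a finite complex linear combination of mutually orthogonal tripotents, and Lemma \ref{l linearity on orthogonal tripotents}$(b)$ yields the identity at once; the general case follows by the norm-density of such algebraic elements in $E_2(e)_{sa}$ (cf.\ \cite[Lemma 3.11]{Horn}) and the continuity of $T$ (Lemma \ref{l basic properties}$(e)$). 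Combined with Proposition \ref{p 2-local in Peirce-2 Peirce-1 arithmetics} applied to $(e+ih)+y\in E_2(e)\oplus E_1(e)$, this gives $T(e+ih+y)=T(e)+iT(h)+T(y)$.

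For $(a)$, I set $w=e+ih+y+z$ with $\|z\|<1$ and pair the triple homomorphisms $\Phi_{w,e+z}$ and $\Phi_{w,ih+y}$. By Lemma \ref{l triple hom with z open unit ball}, $\Phi_{w,e+z}(e)=T(e)$, hence $\Phi_{w,e+z}(h)\in F_2(T(e))_{sa}$ and $\Phi_{w,e+z}(y)\in F_1(T(e))$; Proposition \ref{p 2-local triple homomorphisms are OA} gives $T(e+z)=T(e)+T(z)$. Equating the two resulting expressions for $T(w)$ (using $T(ih+y)=iT(h)+T(y)$ from the preliminary step) then forces
\[
\Phi_{w,ih+y}(e+z)=T(e)+ik+g+T(z),
\]
where $k:=\Phi_{w,e+z}(h)-T(h)\in F_2(T(e))_{sa}$ and $g:=\Phi_{w,e+z}(y)-T(y)\in F_1(T(e))$. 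Since $\|e+z\|=1$ and the Peirce projections are contractive, the Peirce-$2$ summand $T(e)+ik$ has norm at most $1$; functional calculus in the commutative unital JB$^*$-subalgebra of $F_2(T(e))$ generated by $k$ yields $\|T(e)+ik\|=\sqrt{1+\|k\|^2}$, forcing $k=0$. The equality $\|\Phi_{w,ih+y}(e+z)\|=1$ together with $P_2(T(e))\Phi_{w,ih+y}(e+z)=T(e)$ now puts Lemma \ref{l FriRu 1.6} in play to give $g=0$, completing $(a)$.

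Part $(b)$ is proved first for $\|z\|<1$ by setting $w=ih+y+z$ and $\tilde w=e+w$, applying $(a)$ to $T(\tilde w)$, and repeating the JB$^*$-algebra-plus-Lemma \ref{l FriRu 1.6} mechanism on the tripotent $\Phi_{w,\tilde w}(e)=T(\tilde w)-T(w)$ to deduce $P_2(T(e))T(w)=iT(h)$ and $P_1(T(e))T(w)=T(y)$; the Peirce-$0$ component is then pinned down by the continuity scheme of Lemma \ref{l 2-local and Peirce arithmetics}$(c)$, applied to the $\{0,1\}$-valued continuous map $\lambda\mapsto \|\lambda T(z)-P_0(T(e))T(ih+y+\lambda z)\|$, which vanishes at $\lambda=0$. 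A 1-homogeneity rescaling removes the hypothesis $\|z\|<1$. Part $(c)$ is obtained by the same continuity scheme with $w_\lambda=e+ih+y+\lambda z$ and the pair $\Phi_{w_\lambda,e}$, $\Phi_{w_\lambda,ih+y+\lambda z}$ (using $(b)$ in place of the preliminary step). The ``consequently'' statement reduces, upon writing $\lambda=a+ib$ and observing $\lambda e+ih=ae+i(be+h)$ with $be+h\in E_2(e)_{sa}$, to the real-$a$ version of $(c)$ (handled by 1-homogeneity in $a$) together with the auxiliary additivity $T(be+h)=bT(e)+T(h)$, itself established as in the preliminary step by decomposing algebraic $be+h$ into orthogonal tripotents. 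The main obstacle, relative to Lemma \ref{l 2-local and Peirce arithmetics}, is that Lemma \ref{l FriRu 1.6} cannot be applied directly to $\Phi_{w,y}(e+ih+z)$ because $\|e+ih\|=\sqrt{1+\|h\|^2}$ typically exceeds $1$; the JB$^*$-algebra identity $\|T(e)+ik\|=\sqrt{1+\|k\|^2}$ is exactly what repairs this by forcing the skew-adjoint obstruction $ik$ to vanish.
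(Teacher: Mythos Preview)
Your proof is correct and tracks the paper's closely for $(a)$ and $(c)$: the same pairings $\Phi_{w,e+z}$, $\Phi_{w,ih+y}$ (resp.\ $\Phi_{w_\lambda,e}$, $\Phi_{w_\lambda,ih+y+\lambda z}$), the same norm identity $\|T(e)+ik\|^2\geq 1+\|k\|^2$ in $F_2(T(e))$ to kill the skew part before invoking Lemma~\ref{l FriRu 1.6}, and the same continuity scheme for $(c)$. Your preliminary identity $T(e+ih)=T(e)+iT(h)$ via spectral decomposition and your explicit reduction of the ``Consequently'' clause through $\lambda e+ih=ae+i(be+h)$ and $T(be+h)=bT(e)+T(h)$ are welcome details that the paper leaves implicit.

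For $(b)$ you depart slightly, and your sketch has a small omission. To run the ``JB$^*$-algebra-plus-Lemma~\ref{l FriRu 1.6}'' mechanism on the tripotent $\Phi_{w,\tilde w}(e)=T(e)+iT(h)+T(y)+T(z)-T(w)$ you must first know that $P_2(T(e))T(w)\in iF_2(T(e))_{sa}$; this is not automatic from what you have written, and you need an auxiliary homomorphism (e.g.\ $\Phi_{w,e}$, giving $T(w)=i\Phi_{w,e}(h)+\Phi_{w,e}(y)+\Phi_{w,e}(z)$ with $\Phi_{w,e}(e)=T(e)$) to see it. The paper instead pairs $\Phi_{a,w}$ with $\Phi_{a,e+z}$ (where $a=ih+y+z$): since $\Phi_{a,e+z}(e)=T(e)$ by Lemma~\ref{l triple hom with z open unit ball}, this single choice pins down \emph{all three} Peirce components of $T(a)$ at once---in particular $P_0(T(e))T(a)=T(z)$ directly---so no continuity scheme is needed for $(b)$. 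Your variant works once the omission is patched, but the paper's route is shorter here.
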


\begin{proof} Along this proof we set $w=e+i h+y+z$.\smallskip

\noindent We shall assume first that $T(e)= 0$. By Lemma \ref{l basic properties}$(g)$, we have $T(ih) = T(y)=0$.\smallskip

\noindent $(a)$ It follows from Lemma \ref{l triple hom with z open unit ball} that $\Phi_{w,e+z} (e) = T(e)=0$, and hence $\Phi_{w, e+z} (E_{2} (e) \oplus E_1 (e))=\{0\}$. Therefore, $$T(e+i h+y+z)=\Phi_{w,e+z}(w)=T(e+z)+i \Phi_{w,e+z}(h)+\Phi_{w,e+z}(y)$$
$$= \hbox{(by Proposition \ref{p 2-local triple homomorphisms are OA})}=T(e)+T(z)+i \Phi_{w,e+z}(h)+\Phi_{w,e+z}(y) = T(z).$$

$(b)$ Since $T$ is $1$-homogeneous, we may assume, without losing generality, that $\|z\|<1$. Lemma \ref{l triple hom with z open unit ball} implies that $\Phi_{w-e,e+z} (e) = T(e)=0$. We write $$T(ih +y+z) = \Phi_{w-e,e+z}(w-e)=T(e+z)+i \Phi_{w-e,e+z}(h)+\Phi_{w-e,e+z}(y) $$
$$ = T(e+z)= \hbox{(by Proposition \ref{p 2-local triple homomorphisms are OA})}=T(e)+T(z)=T(z).$$\smallskip

$(c)$ Given $\lambda\in [0,1]$, we set $w_{\lambda}:=e+ i h+y+\lambda z$. The identities:
$$T(w_{\lambda})=\Phi_{w_{\lambda},e}(w_{\lambda})=T(e)+ i \Phi_{w_{\lambda},e}(h)+\Phi_{w_{\lambda},e}(y)+\lambda \Phi_{w_{\lambda},e}(z)= \lambda \Phi_{w_{\lambda},e}(z),$$ and
$$T(w_{\lambda})=\Phi_{w_{\lambda},ih+y+\lambda z}(w_{\lambda}) = \Phi_{w_{\lambda},ih+y+\lambda z} (e) + T(ih+y+\lambda z)$$
$$=\hbox{(by $(b)$)}= \Phi_{w_{\lambda},ih+y+\lambda z} (e) + iT(h) +T(y) +\lambda T(z)=  \Phi_{w_{\lambda},ih+y+\lambda z} (e) +\lambda T(z)$$ assure that
$$ \Phi_{w_{\lambda},ih+y+\lambda z} (e) = T(w_{\lambda}) - \lambda T(z).$$ Arguing as in the proof of Lemma \ref{l 2-local and Peirce arithmetics}$(c)$ (case $T(e) =0$), we deduce that $T(w_{\lambda}) = \lambda T(z)$, for every $0\leq \lambda\leq 1$.
\medskip

\noindent We suppose from this moment that $T(e)\neq 0$.\smallskip

\noindent $(a)$ We begin with the identity $$T(w)=\Phi_{w,e+z}(w)=T(e+z)+i \Phi_{w,e+z}(h)+\Phi_{w,e+z}(y)$$
$$= \hbox{(by Proposition \ref{p 2-local triple homomorphisms are OA})}=T(e)+T(z)+i \Phi_{w,e+z}(h)+\Phi_{w,e+z}(y).$$
We deduce, by Lemma \ref{l triple hom with z open unit ball}, that $\Phi_{w,e+z}(e)=T(e)$, which, in particular, gives $\Phi_{w,e+z}(y)\in
F_1(T(e))$ and $\Phi_{w,e+z}(h)\in F_2 (T(e))_{sa}$.\smallskip

On the other hand,
$$T(w)=\Phi_{w,ih +y}(w)=T(i h+ y)+\Phi_{w,ih +y}(e+z)$$ $$= \hbox{(by Proposition \ref{p 2-local in Peirce-2 Peirce-1 arithmetics})} = i T( h) + T( y)+\Phi_{w,ih +y}(e+z),$$ and hence $$\Phi_{w,ih +y}(e+z) = T(e)+ i \left(\Phi_{w,e+z}(h)-T(h)\right)+ \left(\Phi_{w,e+z}(y)-T(y)\right)+
T(z).$$ The element $T(e)+ i \left(\Phi_{w,e+z}(h)-T(h)\right)$ lies in the JB$^*$-algebra $F_2 (T(e))$ and $\left(\Phi_{w,e+z}(h)-T(h)\right)\in F_2 (T(e))_{sa}$ (cf. Lemma \ref{l basic properties}$(f)$) with $$\left\|T(e)+ i \left(\Phi_{w,e+z}(h)-T(h)\right)\right\| \leq \|\Phi_{w,ih +y}(e+z) \|\leq 1.$$ Therefore $$1\geq\left\|T(e)+ i \left(\Phi_{w,e+z}(h)-T(h)\right)\right\|^2 \geq 1+ \left\|\Phi_{w,e+z}(h)-T(h)\right\|^2,$$ witnessing that $\Phi_{w,e+z}(h)=T(h)$.
Having in mind that $\|\Phi_{w,i h +y}(e+z)\|\leq 1$, and $$\Phi_{w,ih +y}(e+z) = T(e)+ \left(\Phi_{w,e+z}(y)-T(y)\right)+
T(z),$$ Lemma \ref{l FriRu 1.6} implies that $$0=P_1(T(e))\Phi_{w,ih +y}(e+z)=\Phi_{w,e+z}(y)-T(y).$$

$(b)$ Since $T$ is 1-homogeneous, we may assume, without loss of generality, that $\|z\|<1$. Denoting $a= i h + y + z$, it follows that $$T(i h + y + z)=\Phi_{a,w}(e+i h + y + z-e)=\Phi_{a,w}(e+i h+ y+z)-\Phi_{a,w}(e)$$ $$= T(e+i h+ y+z) - \Phi_{a,w}(e)=\hbox{(by $(a)$)}=T(e)+ i T(h)+T(y)+T(z)-\Phi_{a,w}(e).$$

On the other hand, $$T(i h + y + z)=\Phi_{a,e+z}(e+i h + y + z-e)$$ $$=T(e+z)+i \Phi_{a,e+z}(h)+ \Phi_{a,e+z}(y)-\Phi_{a,e+z}(e)= \hbox{(by Proposition \ref{p 2-local triple homomorphisms are OA})}$$ $$= T(e) +T(z)+i \Phi_{a,e+z}(h)+ \Phi_{a,e+z}(y) -\Phi_{a,e+z}(e).$$

We conclude from Lemma \ref{l triple hom with z open unit ball} that $\Phi_{a,e+z}(e) = T(e)$. Therefore $\Phi_{a,e+z}(h)\in F_2 (T(e))_{sa}$, $\Phi_{a,e+z}(y)\in F_1(T(e)),$
$$T(i h + y + z)=i \Phi_{a,e+z}(h)+ \Phi_{a,e+z}(y)+ T(z),$$ and hence $$\Phi_{a,w}(e) = T(e) + i (T(h)-\Phi_{a,e+z}(h)) + (T(y)-\Phi_{a,e+z}(y)).$$ The arguments given in the final part of the proof of $(a)$ show that $T(h)=\Phi_{a,e+z}(h)$ and  $T(y)=\Phi_{a,e+z}(y)$.\smallskip

$(c)$ For each real number $\lambda\in [0,1]$, we denote $w_{\lambda}:=e+ i h+y+\lambda z$.
By the assumptions
$$T(w_{\lambda})=\Phi_{w_{\lambda},e}(w_{\lambda})=T(e)+ i \Phi_{w_{\lambda},e}(h)+\Phi_{w_{\lambda},e}(y)+\lambda \Phi_{w_{\lambda},e}(z),$$
where $\Phi_{w_{\lambda},e}(h)\in F_2 (T(e))_{sa}$, $\Phi_{w_{\lambda},e}(y)\in F_1(T(e)),$ and $\Phi_{w_{\lambda},e}(z)\in F_0(T(e))$.
Applying $(b)$ we deduce that
$$T(w_{\lambda})=\Phi_{w_{\lambda},ih+y+\lambda z}(w_{\lambda})=i T(h) + T(y)+\lambda
    T(z)+\Phi_{w_{\lambda},ih+y+\lambda z}(e).$$
The above identities show that $$\Phi_{w_{\lambda},ih + y+\lambda z}(e)=T(e)+ i (\Phi_{w_{\lambda},e}(h)-T(h))  +(\Phi_{w_{\lambda},e}(y)-T(y))$$ $$+\lambda (\Phi_{w_{\lambda},e}(z)-T(z)).$$ Repeating again the arguments given in the final part of the proof of $(a)$ we obtain $T(h)=\Phi_{w_{\lambda},e}(h)$ and  $T(y)=\Phi_{w_{\lambda},e}(y)$. Therefore,
$$\Phi_{w_{\lambda},ih+y+\lambda z}(e)=T(e)+\lambda (\Phi_{w_{\lambda},e}(z)-T(z)).$$
Since $\Phi_{w_{\lambda},ih+y+\lambda z}(e)$ is a tripotent, we deduce that $$P_0(T(e))\Phi_{w_{\lambda},ih+y+\lambda z}(e)=\lambda
    (\Phi_{w_{\lambda},e}(z)-T(z))=P_0(T(e))(T(w_{\lambda}))-\lambda T(z)$$ is a tripotent.\smallskip

Finally the mapping $f:[0,1]\to \{0,1\}$ given by
$$f(\lambda)=\| P_0(T(e))(T(w_{\lambda}))-\lambda T(z)\|$$ is (norm) continuous and $f(0)=0$.
Then $f(\lambda)=0$ for every $\lambda \in [0,1]$, and hence $\Phi_{w_{1},e}(z)=T(z),$ which gives the desired statement.
\end{proof}

\section{2-local triple homomorphisms on a JBW$^*$-algebra or on a JBW$^*$-triple}

In this section we establish the main results of the paper. Our study on 2-local triple homomorphisms will culminate in a result asserting that every (not necessarily linear) 2-local triple homomorphism from a JBW$^*$-triple into a JB$^*$-triple is linear and a triple homomorphism. In a first step we consider 2-local triple homomorphisms whose domains are JBW$^*$-algebras.

\subsection{2-local triple homomorphisms on a JBW$^*$-algebra}

The aim of this subsection is to study 2-local triple homomorphisms from a JBW$^*$-algebra or from a von Neumann algebra into a JB$^*$-triple. The results in these settings are interesting by themselves but also play a crucial role in the proof of our main result for JBW$^*$-triples.\smallskip

Let $\Phi : \mathcal{J} \to F$ be a triple homomorphism from a unital JB$^*$-algebra into a JB$^*$-triple. Clearly $\Phi (1)$ is a tripotent in $F$ and $F_2 (\Phi (1))$ is a JB$^*$-algebra with unit $\Phi(1)$. Given $a$ in $\mathcal{J}$, the identities $$\left\{\Phi (1),\Phi(1),\Phi(a) \right\} = \Phi \{1,1,a\} = \Phi (a),$$ and $$\Phi(a)^{\sharp_{\Phi(1)}} = \left\{\Phi (1),\Phi(a),\Phi(1) \right\} = \Phi \{1,a,1\} = \Phi (a^*),$$ prove that $\Phi(\mathcal{J})\subseteq F_2 (\Phi(1))$ and $\Phi(\mathcal{J}_{sa})\subseteq F_2 (\Phi(1))_{sa}$. More precisely, $\Phi$ is $F_2 (\Phi(1))$-valued and $\Phi : \mathcal{J} \to F_2 (\Phi(1))$ is a unital Jordan $^*$-homomorphism between unital JB$^*$-algebras. For 2-local triple homomorphisms we have:

\begin{lemma}\label{l basic 2-local triple hom on a JBW$^*$-algebra} Let $T: \mathcal{J} \to F$ be a (not necessarily linear) 2-local triple homomorphism from a unital JB$^*$-algebra into a JB$^*$-triple. Then the following statements hold:\begin{enumerate}[$(a)$] \item $T(\mathcal{J})\subseteq F_2 (T(1))$;
\item $T(\mathcal{J}_{sa})\subseteq F_2 (T(1))_{sa}$;
\item $T(a)$ is positive in $F_2 (T(1))$ whenever $a$ is positive in $\mathcal{J}$.
%\item $T(a)$ is invertible in $F_2 (T(1))$ whenever $a$ is invertible in $\mathcal{J}$;
%\item $T(a+ib)=T(a)+ i T(b)$, whenever $a$ and $b$ are positive invertible elements in $\mathcal{J}$.
\end{enumerate}
\end{lemma}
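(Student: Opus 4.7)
The plan is to exploit directly the structural observation made in the paragraph immediately preceding the statement: for any triple homomorphism $\Phi : \mathcal{J} \to F$ from a unital JB$^*$-algebra into a JB$^*$-triple, $\Phi(1)$ is a tripotent in $F$, $\Phi$ is $F_2(\Phi(1))$-valued, and the corestriction $\Phi : \mathcal{J} \to F_2(\Phi(1))$ is a unital Jordan $^*$-homomorphism between unital JB$^*$-algebras (with product $\circ_{\Phi(1)}$ and involution $\sharp_{\Phi(1)}$). The lemma then follows by applying this observation to each linear triple homomorphism witnessing the $2$-locality of $T$ at the pair $(1,a)$.

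First I would fix an element $a\in\mathcal{J}$ and a linear triple homomorphism $\Phi_{1,a}:\mathcal{J}\to F$ satisfying $\Phi_{1,a}(1)=T(1)$ and $\Phi_{1,a}(a)=T(a)$. By the observation recalled above, $\Phi_{1,a}(\mathcal{J})\subseteq F_2(\Phi_{1,a}(1))=F_2(T(1))$, so $T(a)=\Phi_{1,a}(a)\in F_2(T(1))$, which gives $(a)$. Since $a\in\mathcal{J}$ was arbitrary, this establishes that $T$ lands in the Peirce-$2$ subspace, and moreover each $\Phi_{1,a}$ factors through a unital Jordan $^*$-homomorphism into $F_2(T(1))$.

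For $(b)$, if $a\in\mathcal{J}_{sa}$, then $a^{*}=a$ and unital Jordan $^{*}$-homomorphisms preserve self-adjointness, so
\[
T(a)=\Phi_{1,a}(a)=\Phi_{1,a}(a^{*})=\Phi_{1,a}(a)^{\sharp_{T(1)}}=T(a)^{\sharp_{T(1)}},
\]
i.e. $T(a)\in F_2(T(1))_{sa}$. For $(c)$, assume $a\geq 0$ in $\mathcal{J}$ and write $a=h\circ h$ with $h\in\mathcal{J}_{sa}$. Since $\Phi_{1,a}:\mathcal{J}\to F_2(T(1))$ is a unital Jordan $^{*}$-homomorphism, it carries the Jordan product of $\mathcal{J}$ to $\circ_{T(1)}$, so
\[
T(a)=\Phi_{1,a}(h\circ h)=\Phi_{1,a}(h)\circ_{T(1)}\Phi_{1,a}(h),
\]
and by $(b)$ the element $\Phi_{1,a}(h)$ lies in $F_2(T(1))_{sa}$. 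Hence $T(a)$ is a square of a self-adjoint element of the unital JB$^*$-algebra $F_2(T(1))$, and is therefore positive there.

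There is no serious obstacle: the whole argument is a direct corollary of the preliminary calculation that any triple homomorphism out of a unital JB$^*$-algebra corestricts to a unital Jordan $^{*}$-homomorphism into the Peirce-$2$ space of the image of the unit, applied pointwise to the family $\{\Phi_{1,a}\}_{a\in\mathcal{J}}$ provided by the $2$-local hypothesis. The only point requiring mild care is to notice that although the chosen $\Phi_{1,a}$ may depend on $a$, the target subspace $F_2(T(1))$ does not, because every such $\Phi_{1,a}$ sends $1$ to the \emph{same} tripotent $T(1)$.
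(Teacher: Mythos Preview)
Your proof is correct and follows essentially the same route as the paper's: both simply apply the observation preceding the lemma to the witnessing homomorphism $\Phi_{1,a}$ for each $a\in\mathcal{J}$, using that $\Phi_{1,a}(1)=T(1)$ pins down the common target Peirce-$2$ space. One cosmetic remark on part $(c)$: your appeal to ``$(b)$'' for $\Phi_{1,a}(h)\in F_2(T(1))_{sa}$ is slightly imprecise, since $(b)$ is a statement about $T(h)$ rather than $\Phi_{1,a}(h)$; what you are really using (and what is valid) is that the unital Jordan $^*$-homomorphism $\Phi_{1,a}$ itself preserves self-adjointness.
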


\begin{proof} For each $a\in \mathcal{J},$ and $b\in \mathcal{J}_{sa}$, the comments preceding this lemma assure that $T(a)= \Phi_{1,a} (a) \in F_2 (\Phi_{1,a}(1)) = F_2 (T(1)),$ and $T(b) = \Phi_{1,b} (b)\in F_2 (\Phi_{1,b}(1))_{sa} = F_2 (T(1))_{sa},$  which proves $(a)$ and $(b)$.\smallskip

To prove $(c)$, suppose $a$ is a positive element in $\mathcal{J}.$ Since the triple homomorphism $\Phi_{a,1} : \mathcal{J} \to F_2 (\Phi_{a,1}(1)) = F_2 (T(1))$ is a unital Jordan $^*$-homomorphism between unital JB$^*$-algebras, $T(a)=\Phi_{1,a} (a)$ is positive in $F_2 (\Phi_{a,1}(1)) = F_2 (T(1))$.
\end{proof}

Let $\mathcal{J}$ be a JB$^*$-algebra and let $E$ be a JB$^*$-triple. Following the notation employed in \cite{Aarnes70} and \cite{BuWri96},
a \emph{quasi-linear functional} on $\mathcal{J}$ is a function $\rho: \mathcal{J} \rightarrow \mathbb{C}$ such that \begin{enumerate}[$(i)$]\item $\rho|_{\mathcal{J}_{<h>}}: \mathcal{J}_{<h>} \to \mathbb{C}$ is a linear functional for each $h\in \mathcal{J}_{sa}$, where $\mathcal{J}_{<h>}$ denotes the JB$^*$-subalgebra generated by $h$;
\item $\rho (a + ib) = \rho (a) + i \rho (b)$, for every $a,b\in \mathcal{J}_{sa}$.
\end{enumerate} If we also assume that, for each $h\in \mathcal{J}_{sa}$, $\rho|_{\mathcal{J}_{<h>}}$ is a positive linear functional, we shall say that $\rho$ is \emph{positive quasi-linear functional} on $\mathcal{J}$. A mapping $\rho : E \to \mathbb{C}$ is said to be a \emph{quasi-linear functional} on $E$ if for every $a$ in $E$, the restriction of $\rho$ to the JB$^*$-subtriple, $E_a$, of $E$ generated by $a$ is linear.\smallskip

Let $T : E \to F$ be a (not necessarily linear) 2-local triple homomorphism between JB$^*$-triples. For each $\phi\in F^*$, Lemma \ref{l linearity on single generated subalgebras} assures that $\phi \circ T : E\to \mathbb{C}$ is a quasi-linear functional on $E$ in the triple sense. Our next proposition shows that a stronger property holds for 2-local triple homomorphisms from a JBW$^*$-algebra into a JB$^*$-triple.

\begin{proposition}\label{p quasi-linearity} Let $T: \mathcal{J} \to F$ be a (not necessarily linear) 2-local triple homomorphism from a JBW$^*$-algebra into a JB$^*$-triple. Then $$T(a+ib)= T(a) +i T(b),$$ for every $a,b\in \mathcal{J}_{sa}$. %In particular, for each $\phi\in F^*$  {\rm(}respectively, $\varphi\in F_2(T(1))^{*}_{+}${\rm)}, the mapping $\phi\circ T : \mathcal{J} \to \mathbb{C}$  {\rm(}respectively, $\varphi\circ T${\rm)} is a quasi-linear functional  {\rm(}respectively, positive quasi-linear functional{\rm)} on the JB$^*$-algebra $\mathcal{J}$.
\end{proposition}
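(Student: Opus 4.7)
The plan is to reduce the claim to the case where $a$ is an algebraic self-adjoint element and then proceed by induction on its spectral length $m$. The main tool is Lemma~\ref{l 2-local and Peirce arithmetics II}, used both to exhibit a ``Peirce additivity on $\mathcal{J}_{sa}$'' and to carry out the inductive step.

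By the 1-Lipschitz continuity of $T$ (Lemma~\ref{l basic properties}$(e)$) and the norm-density of algebraic self-adjoint elements in $\mathcal{J}_{sa}$ (\cite[Lemma~3.11]{Horn}), it suffices to treat $a=\sum_{k=1}^{m}\lambda_k e_k$ with $\lambda_k\in\mathbb{R}$ and mutually orthogonal projections $e_k\in\mathcal{J}$. As a preliminary, I record the following \emph{Peirce additivity on self-adjoint inputs}: for every projection $e\in\mathcal{J}$ and every $c\in\mathcal{J}_{sa}$ with Peirce components $c_j=P_j(e)c\in\mathcal{J}_{sa}$ ($j=0,1,2$), one has $T(c)=T(c_2)+T(c_1)+T(c_0)$. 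Indeed, Lemma~\ref{l 2-local and Peirce arithmetics II}$(b)$ applied with $h=c_2\in E_2(e)_{sa}$, $y=ic_1\in E_1(e)$, $z=ic_0\in E_0(e)$ yields
\begin{equation*}
T(i(c_2+c_1+c_0))=iT(c_2)+T(ic_1)+T(ic_0)=i\bigl(T(c_2)+T(c_1)+T(c_0)\bigr),
\end{equation*}
via $1$-homogeneity (Lemma~\ref{l basic properties}$(a)$); comparing with $T(i(c_2+c_1+c_0))=iT(c)$ and cancelling $i$ gives the claim.

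The induction on $m$ proceeds as follows. For $m=1$, with $a=\lambda_1 e_1$ and Peirce decomposition $b=b_2+b_1+b_0$ relative to $e_1$, Lemma~\ref{l 2-local and Peirce arithmetics II}$(c)$ (with $\lambda=\lambda_1$, $h=b_2$, $y=ib_1$, $z=ib_0$) yields
\begin{equation*}
T(a+ib)=\lambda_1 T(e_1)+i\bigl(T(b_2)+T(b_1)+T(b_0)\bigr),
\end{equation*}
and the Peirce additivity collapses the bracket to $iT(b)$, giving $T(a+ib)=T(a)+iT(b)$. For the inductive step $m\to m+1$, I separate the last projection: set $e=e_{m+1}$ and $a'=\sum_{k\leq m}\lambda_k e_k\in E_0(e)_{sa}$, and Peirce-decompose $b=b_2+b_1+b_0$ with respect to $e$. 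Since $ib_0+a'\in E_0(e)$, Lemma~\ref{l 2-local and Peirce arithmetics II}$(c)$ applied to
\begin{equation*}
a+ib=\lambda_{m+1}e+ib_2+ib_1+(ib_0+a')
\end{equation*}
reduces the problem to evaluating $T(ib_0+a')=T(a'+ib_0)$; the inductive hypothesis applied to the pair $(a',b_0)\in\mathcal{J}_{sa}\times\mathcal{J}_{sa}$ with $a'$ of length $m$ gives $T(a')+iT(b_0)$, and combining with Peirce additivity for $b$ and Lemma~\ref{l linearity on orthogonal tripotents}$(b)$ (yielding $T(a)=\sum_k\lambda_k T(e_k)$) closes the induction.

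The main obstacle is the asymmetry in Lemma~\ref{l 2-local and Peirce arithmetics II}$(c)$ between the scalar slot $\lambda e$ and the Peirce slots: the $E_2(e)$-part is forced to be a scalar multiple of $e$ plus an imaginary self-adjoint correction, which prevents a direct application when $a$ is a general self-adjoint element. The resolution is the combination of the spectral-length induction (which writes $a$ as a sum of scalar multiples of projections and peels them off one at a time) with the trick of substituting $\lambda=0$ together with purely imaginary Peirce entries $ic_1$, $ic_0$ to extract the auxiliary Peirce additivity needed to absorb the leftover pieces of $b$.
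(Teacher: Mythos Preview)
Your proof is correct and follows essentially the same approach as the paper: reduce to algebraic $a$ by continuity, then induct on the number of projections, peeling one off at a time via Lemma~\ref{l 2-local and Peirce arithmetics II}$(c)$ and reassembling the Peirce pieces of $b$ via Lemma~\ref{l 2-local and Peirce arithmetics II}$(b)$. The only cosmetic differences are that you isolate the ``Peirce additivity'' $T(c)=T(c_2)+T(c_1)+T(c_0)$ as a named preliminary (the paper invokes Lemma~\ref{l 2-local and Peirce arithmetics II}$(b)$ inline each time), you peel off the last projection rather than the first, and you recombine $\lambda_{m+1}T(e_{m+1})+T(a')$ into $T(a)$ via Lemma~\ref{l linearity on orthogonal tripotents}$(b)$ where the paper uses Proposition~\ref{p 2-local triple homomorphisms are OA}.
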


\begin{proof} It is known that every $a\in \mathcal{J}_{sa}$, can be approximated in norm by a finite (real) linear combination of mutually orthogonal (non-zero) projections in $\mathcal{J}$ (\cite[Proposition 4.2.3]{HancheStor}). Since $T$ is continuous, it is enough to prove that \begin{equation}\label{eq 1 p quasi-states} T(a+ib)= T(a) +i T(b),
 \end{equation}for every $b\in \mathcal{J}_{sa}$ and $\displaystyle a=\sum_{k=1}^{m} \lambda_k p_k$, where $\lambda_k\in \mathbb{R}\backslash\{0\}$ and $p_1,\ldots,p_m$ are mutually orthogonal projections in $\mathcal{J}$. We shall prove \eqref{eq 1 p quasi-states} by induction on $m$.\smallskip

For the case $m=1$, we assume that $a= \lambda p$ for a non-zero projection $p$ and $\lambda\in \mathbb{R}\backslash\{0\}$. The element $b$ writes in the form $b= P_2 (p) (b)+ P_1(p)(b)+P_0(p)(b)$, and since $b=b^*$ and $p$ is a projection, $P_2 (p) (b) \in \mathcal{J}_{sa}$. Applying Lemma \ref{l 2-local and Peirce arithmetics II}$(c)$, we have $$T(a+ib) =  \lambda T\left(p\right)+i T\left( P_2 (p) (b) \right) + i T\left( P_1(p)(b)\right)+ i T\left(P_0(p)(b)\right) $$
$$=T(a) + i T\left( P_2 (p) (b) \right) + i T\left( P_1(p)(b)\right)+ i T\left(P_0(p)(b)\right),$$
by Lemma \ref{l 2-local and Peirce arithmetics II}$(b)$,
$$=T(a) + T\left( i P_2 (p) (b) + i  P_1(p)(b)+ i P_0(p)(b)\right) =T(a+ib).$$

Suppose, by the induction hypothesis, that \eqref{eq 1 p quasi-states} is true for every algebraic element $\displaystyle \sum_{k=1}^{m_1} \mu_k q_k$, with $m_1\leq m$, $\lambda_k\in \mathbb{R}\backslash\{0\}$ and $q_1,\ldots,q_{m_1}$ mutually orthogonal projections in $\mathcal{J}$. Let us take an algebraic element of the form $\displaystyle a=\sum_{k=1}^{m+1} \lambda_k p_k$. Let us write $b= P_2 (p_1) (b)+ P_1(p_1)(b)+P_0(p_1)(b)$, and since $b=b^*$ and $p_1$ is a projection, $P_2 (p_1) (b), P_0 (p_1) (b) \in \mathcal{J}_{sa}$. Let us observe that $$T(a+ib) = T\left(\lambda_1 p_1 + i P_2 (p_1)(b) + i P_1 (p_1)(b) + \sum_{k=2}^{m+1} \lambda_k p_k + i P_0 (p_1)(b) \right),$$ where $P_2 (p_1) (b)\in \mathcal{J}_2 (p_1)_{sa}$, $i P_1 (p_1)(b)\in \mathcal{J}_1 (p_1)$, and  $\sum_{k=2}^{m+1} \lambda_k p_k + i P_0 (p_1)(b)$ lies in $\mathcal{J}_0 (p_1)$. Lemma \ref{l 2-local and Peirce arithmetics II}$(c)$ implies that
$$T(a+ib) = \lambda_1 T\left( p_1\right) + i T\left(P_2 (p_1)(b)\right) + T\left(i P_1 (p_1)(b) \right)$$ $$+ T\left(\sum_{k=2}^{m+1} \lambda_k p_k + i P_0 (p_1)(b) \right) = \hbox{(by the induction hypothesis)}= $$ $$= \lambda_1 T\left( p_1\right) + i T\left(P_2 (p_1)(b)\right) + T\left(i P_1 (p_1)(b) \right)+ T\left(\sum_{k=2}^{m+1} \lambda_k p_k\right) + T\left(i P_0 (p_1)(b) \right)$$ $$=  \hbox{(by Proposition \ref{p 2-local triple homomorphisms are OA})}=  T\left(\lambda_1 p_1 + \sum_{k=2}^{m+1} \lambda_k p_k \right) + i T\left(P_2 (p_1)(b)\right) $$ $$+ T\left(i P_1 (p_1)(b) \right)+ T\left(i P_0 (p_1)(b) \right)  = \hbox{(by Lemma \ref{l 2-local and Peirce arithmetics II}$(b)$ with $e=p_1$)}$$ $$= T(a) + i T\left(P_2 (p_1)(b)+  P_1 (p_1)(b) + P_0 (p_1)(b) \right)  =T(a) + i T(b).$$
\end{proof}

We can establish now a generalization of \cite[Theorem 4.1]{BurFerGarPe2014preprint} for 2-local triple homomorphisms.

\begin{theorem}\label{t 2-local hom on JBW$^*$-algebras not containing M2 nor S2}
Let $\mathcal{J}$ be a JBW$^*$-algebra with no Type $I_2$ direct
summand and let $F$ be a JB$^*$-triple. Suppose $T : \mathcal{J} \to
F$ is a (not necessarily linear) 2-local triple homomorphism.
Then $T$ is linear and a (continuous) triple homomorphism. More concretely, $T:\mathcal{J} \to
F_2 (T(1))$ is a linear unital Jordan $^*$-homomorphism between JB$^*$-algebras.
\end{theorem}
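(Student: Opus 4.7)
The plan is to run the same reduction used for 2-local $^*$-homomorphisms in \cite[Theorem~4.1]{BurFerGarPe2014preprint}, but plugging in the triple-version lemmas developed above. By Lemma~\ref{l basic 2-local triple hom on a JBW$^*$-algebra} the map $T$ already takes values in the unital JB$^*$-algebra $F_2(T(1))$ and sends $\mathcal{J}_{sa}$ into $F_2(T(1))_{sa}$, and by Proposition~\ref{p quasi-linearity} any $\mathbb{R}$-linearity of $T|_{\mathcal{J}_{sa}}$ will automatically promote to $\mathbb{C}$-linearity of $T$ on all of $\mathcal{J}$. So the real content of the theorem is $\mathbb{R}$-linearity of $T$ on the self-adjoint part.

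To prove that, I would fix $\phi\in F_2(T(1))^*$ and study the scalar map $\mu:=\phi\circ T|_{\mathcal{J}_{sa}}$, showing it is a bounded quasi-linear functional on $\mathcal{J}_{sa}$ in the sense of Bunce--Wright. For every $h\in \mathcal{J}_{sa}$ the JB$^*$-subalgebra $\mathcal{J}_{\langle h\rangle}$ generated by $h$ coincides with the JB$^*$-subtriple $\mathcal{J}_{h}$, so Lemma~\ref{l linearity on single generated subalgebras} makes $\mu$ linear on each $\mathcal{J}_{\langle h\rangle}$; when $\phi$ is chosen positive, Lemma~\ref{l basic 2-local triple hom on a JBW$^*$-algebra}$(c)$ makes it positive on the positive cone of that subalgebra; orthogonal additivity on projections comes from Proposition~\ref{p 2-local triple homomorphisms are OA}; and Lemma~\ref{l basic properties}$(e)$ gives the uniform bound $|\mu(h)|\le\|\phi\|\,\|h\|$. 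Because $\mathcal{J}$ has no Type~$I_2$ direct summand, the Bunce--Wright--Mackey--Gleason theorem \cite{BuWri89} then produces a bounded $\mathbb{R}$-linear extension $\widetilde{\mu}$ of $\mu$ from the projection lattice to all of $\mathcal{J}_{sa}$. To conclude $\mu=\widetilde{\mu}$ everywhere, I would use that real linear combinations of mutually orthogonal projections are norm dense in $\mathcal{J}_{sa}$ by \cite[Proposition~4.2.3]{HancheStor}: on such algebraic elements Lemma~\ref{l linearity on orthogonal tripotents}$(b)$ gives the equality, and both sides are norm continuous by Lemma~\ref{l basic properties}$(e)$. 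As $\phi$ was arbitrary in $F_2(T(1))^*$, Hahn--Banach then yields the desired $\mathbb{R}$-linearity of $T|_{\mathcal{J}_{sa}}$.

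Once linearity is in hand, Lemma~\ref{l basic properties}$(c)$ together with the polarisation identity stated there upgrades cube-preservation to preservation of the triple product, so the linear map $T$ is a triple homomorphism. Finally, since $T(1)$ is the unit of $F_2(T(1))$ and $T$ is $F_2(T(1))$-valued, the identities $T(a\circ b)=T(\{a,1,b\})=\{T(a),T(1),T(b)\}=T(a)\circ_{T(1)}T(b)$ and $T(a^{*})=T(\{1,a,1\})=T(a)^{\sharp_{T(1)}}$ exhibit $T:\mathcal{J}\to F_2(T(1))$ as a unital Jordan $^*$-homomorphism. I expect the main obstacle to be the middle paragraph: packaging all the ingredients---linearity on each $\mathcal{J}_{\langle h\rangle}$, positivity for positive $\phi$, boundedness, and additivity on orthogonal projections---so that Bunce--Wright--Mackey--Gleason is exactly applicable, since that is the single deep tool driving the whole argument and the Type~$I_2$ exclusion is assumed precisely to make it available.
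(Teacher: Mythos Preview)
Your proposal is correct and follows essentially the same route as the paper: reduce to $F_2(T(1))$ via Lemma~\ref{l basic 2-local triple hom on a JBW$^*$-algebra}, compose with functionals to obtain a bounded finitely additive measure on $\mathcal{P}(\mathcal{J})$, invoke the Bunce--Wright--Mackey--Gleason theorem (using the no-Type-$I_2$ hypothesis), pass to all of $\mathcal{J}_{sa}$ by density of algebraic elements together with Lemma~\ref{l linearity on orthogonal tripotents}$(b)$ and continuity, separate points, and finish with Proposition~\ref{p quasi-linearity} and Lemma~\ref{l basic properties}$(c)$. One harmless inaccuracy: for self-adjoint $h$ the JB$^*$-subalgebra $\mathcal{J}_{\langle h\rangle}$ need not equal the JB$^*$-subtriple $\mathcal{J}_h$ (the latter is generated by odd powers only), but this claim is not load-bearing in your argument---the Bunce--Wright input you actually use is finite additivity and boundedness on projections, not quasi-linearity on singly generated subalgebras, so you can simply drop that sentence.
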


\begin{proof} We know that $T(1)$ is a tripotent in $F$ (cf. Lemma \ref{l basic properties}). By Lemma \ref{l basic 2-local triple hom on a JBW$^*$-algebra} $T(\mathcal{J})\subseteq F_2 (T(1))$ and $T(\mathcal{J}_{sa})\subseteq F_2 (T(1))_{sa}$.% and $T(a)$ is positive in $F_2 (T(1))$ whenever $a$ is positive in $\mathcal{J}$.
\smallskip

Furthermore, given a projection $p$ in $\mathcal{J}$, since $p\leq 1$, Lemma \ref{l basic properties}$(b)$ and $(d)$ assure that $T(p)$ is a tripotent in $F_2 (T(1))$ with $T(p)\leq T(1),$ which implies that $T(p)$ is a projection in $F_2 (T(1)).$\smallskip

Fix an arbitrary norm-one positive functional $\varphi$ in $F_2 (T(1))^{*}$. Let $\mathcal{P} (\mathcal{J})$ denote the lattice of projections in $\mathcal{J}$. The mapping $$\mu_{\varphi} :\mathcal{P} (\mathcal{J}) \to \mathbb{R} $$ $$\mu_{\varphi} (p) := \varphi (T(p)),$$ is a finitely additive quantum measure on $\mathcal{P} (\mathcal{J})$ in the terminology employed in \cite{BuWri89}, i.e. $\mu_{\varphi} (1)=1$ and $\mu_{\varphi} (p_1+\ldots+p_m ) = \mu_{\varphi} (p_1)+\ldots+ \mu_{\varphi} (p_m )$, whenever $p_1,\ldots, p_m$ are mutually orthogonal projections in $\mathcal{J}$ (this statement follows from Lemma \ref{l linearity on orthogonal tripotents}$(a)$ and the fact that $T(1)$ is the unit element in $F_2 (T(1))$). By the Bunce-Wright-Mackey-Gleason theorem \cite[Theorem 2.1]{BuWri89}, there exists a positive linear functional $\phi_{\varphi}\in \mathcal{J}_{sa}^*$ such that $$ \varphi (T(p)) = \mu_{\varphi} (p) = \phi_{\varphi} (p),$$ for every $p\in \mathcal{P} (\mathcal{J}).$ It follows from Lemma \ref{l linearity on orthogonal tripotents}$(b)$, the continuity of $T$, $\varphi$, and $\phi_{\varphi}$, and the norm density of algebraic elements in $\mathcal{J}_{sa}$ that $$ \varphi (T(a)) =  \phi_{\varphi} (a),$$ for every $a\in \mathcal{J}_{sa}.$ Therefore, $$\varphi \left(T(a+b)\right) = \phi_{\varphi} (a+ b) = \phi_{\varphi} (a) + \phi_{\varphi} (b)$$ $$= \varphi \left(T(a)\right) + \varphi \left(T(b) \right) = \varphi (T(a)+T(b)),$$ for every $a,b\in \mathcal{J}_{sa}.$ Since the positive norm-one functionals in $F_2 (T(1))^{*}$ separate the points of $F_2 (T(1))_{sa}$ (cf. \cite[Lemma 3.6.8]{HancheStor}), we deduce that $T(a+b) = T(a) + T(b)$, for every $a,b\in \mathcal{J}_{sa}$, that is, the restricted mapping $T|_{\mathcal{J}_{sa}} : \mathcal{J}_{sa} \to F_2(T(1))_{sa}\subseteq F$ is linear.\smallskip

Finally, Proposition \ref{p quasi-linearity} shows that $T(a+ib) = T(a) + i T(b)$, for every $a,b\in \mathcal{J}_{sa}$, which gives the desired statement.
\end{proof}

When in the proof of \cite[Corollary 4.4]{BurFerGarPe2014preprint} (respectively, \cite[Corollary 2.11]{BurFerGarPe2014preprint}),
\cite[Proposition 4.2 and Corollary 4.3]{BurFerGarPe2014preprint} (respectively, \cite[Proposition 2.7 and Corollary 2.10]{BurFerGarPe2014preprint})
are replaced with Corollary \ref{c 2-local triple hom on M2 and S2} and Corollary \ref{c stability for ell infty sums}, respectively, and having in mind Proposition \ref{p quasi-linearity}, the arguments in those results remain valid to prove:

\begin{corollary}\label{c type I2}
Every (not necessarily linear) 2-local triple homomorphism from a
Type $I_2$ JBW$^*$-algebra into a JB$^*$-triple is linear and a triple homomor-phism.$\hfill\Box$
\end{corollary}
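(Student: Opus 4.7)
The plan is to follow the blueprint of \cite[Corollary 4.4]{BurFerGarPe2014preprint} almost verbatim, using the triple-version ingredients already established in this paper as drop-in replacements for the $^*$-homomorphism ingredients used there. Concretely, the substitutions are: Corollary \ref{c 2-local triple hom on M2 and S2} takes the place of \cite[Proposition 4.2]{BurFerGarPe2014preprint}, Corollary \ref{c stability for ell infty sums} takes the place of \cite[Corollary 4.3]{BurFerGarPe2014preprint}, and Proposition \ref{p quasi-linearity} supplies the passage from real linearity on $\mathcal{J}_{sa}$ to complex linearity on $\mathcal{J}$.

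First I would invoke the structure theorem for Type $I_2$ JBW$^*$-algebras: any such $\mathcal{J}$ is isometrically Jordan $^*$-isomorphic to an $\ell_\infty$-sum of algebras of the form $L^{\infty}(\Omega,\mu)\,\overline{\otimes}\, C$, where $C$ is a rank-two JBW$^*$-factor, the relevant cases being $C = M_2(\mathbb{C})$ and $C = S_2(\mathbb{C})$ (together with the associated spin factors, which fall under the same arithmetic). Corollary \ref{c stability for ell infty sums} reduces the problem to a single such summand, so from now on I assume $\mathcal{J} = L^{\infty}(\Omega,\mu;\, C)$ with $C\in\{M_2(\mathbb{C}), S_2(\mathbb{C})\}$.

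Next I would handle $T$ on $\mathcal{J}_{sa}$. Every element of $\mathcal{J}_{sa}$ can be norm-approximated by simple functions $\sum_{k} \chi_{\Omega_k}\otimes x_k$ with $\Omega_k$ mutually disjoint measurable sets and $x_k\in C_{sa}$. Since the characteristic projections $\chi_{\Omega_k}\otimes \mathbf{1}_C$ are mutually orthogonal central projections in $\mathcal{J}$, Proposition \ref{p 2-local triple homomorphisms are OA} reduces additivity for two simple functions to fiberwise additivity on each $\Omega_k$, where $T$ restricts to a 2-local triple homomorphism taking values in the corresponding Peirce-2 subspace (by Lemma \ref{l basic 2-local triple hom on a JBW$^*$-algebra}) and hence is linear by Corollary \ref{c 2-local triple hom on M2 and S2}. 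The 1-Lipschitz property from Lemma \ref{l basic properties}(e) then extends real additivity from simple functions to all of $\mathcal{J}_{sa}$, giving real-linearity of $T|_{\mathcal{J}_{sa}}$. Proposition \ref{p quasi-linearity} upgrades this to $\mathbb{C}$-linearity on $\mathcal{J}$, and Lemma \ref{l basic properties}(c) finishes by turning the linear 2-local triple homomorphism into an honest triple homomorphism.

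The main obstacle I anticipate is the passage from fiberwise linearity to global linearity in Step 2: one must ensure that the choices of $\Phi_{a,b}$ provided by the 2-local hypothesis are compatible with the simple-function decomposition, which is precisely the point where Proposition \ref{p 2-local triple homomorphisms are OA} (orthogonal additivity) and the central orthogonality of the $\chi_{\Omega_k}\otimes \mathbf{1}_C$ are used to avoid invoking a single global $\Phi$. Once orthogonal additivity is in hand and the single-fiber case is supplied by Corollary \ref{c 2-local triple hom on M2 and S2}, the remainder is a density argument using the continuity guaranteed by Lemma \ref{l basic properties}(e).
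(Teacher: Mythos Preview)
Your proposal is correct and follows essentially the same approach as the paper: the paper's own proof consists precisely of the instruction to rerun the argument of \cite[Corollary 4.4]{BurFerGarPe2014preprint} with Corollary \ref{c 2-local triple hom on M2 and S2} in place of \cite[Proposition 4.2]{BurFerGarPe2014preprint}, Corollary \ref{c stability for ell infty sums} in place of \cite[Corollary 4.3]{BurFerGarPe2014preprint}, and Proposition \ref{p quasi-linearity} supplying the real-to-complex step. Your expanded outline (structure theorem, reduction to summands, simple-function approximation via orthogonal additivity, fiberwise linearity, density plus Lipschitz continuity) is exactly the content being imported from the preprint.
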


The first main result of this note is a consequence of Theorem \ref{t 2-local hom on JBW$^*$-algebras not containing M2 nor S2},
Corollary \ref{c type I2} and Corollary \ref{c stability for ell infty sums}.

\begin{theorem}\label{t 2-local triple hom on JBW$^*$ algebras}
Every (not necessarily linear) 2-local triple homomorphism from a
JBW$^*$-algebra into a JB$^*$-triple is linear and a triple
homomorphism.$\hfill\Box$
\end{theorem}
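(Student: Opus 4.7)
The plan is to combine the three ingredients explicitly cited, namely Theorem \ref{t 2-local hom on JBW$^*$-algebras not containing M2 nor S2}, Corollary \ref{c type I2}, and Corollary \ref{c stability for ell infty sums}, by invoking the standard type decomposition of a JBW$^*$-algebra. Recall that every JBW$^*$-algebra $\mathcal{J}$ decomposes as an $\ell_\infty$-direct sum $\mathcal{J} = \mathcal{J}_1 \oplus^{\ell_\infty} \mathcal{J}_2$, where $\mathcal{J}_1$ is of Type $I_2$ and $\mathcal{J}_2$ has no Type $I_2$ direct summand. The proof amounts to applying the two structural results to the summands and then gluing them together via the $\ell_\infty$-stability corollary.

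Concretely, given a (not necessarily linear) 2-local triple homomorphism $T : \mathcal{J} \to F$, I would first observe that by Theorem \ref{t 2-local hom on JBW$^*$-algebras not containing M2 nor S2}, every 2-local triple homomorphism $\mathcal{J}_2 \to F$ is automatically linear, and by Corollary \ref{c type I2}, the same is true for every 2-local triple homomorphism $\mathcal{J}_1 \to F$. These are exactly the hypotheses of Corollary \ref{c stability for ell infty sums} (for the two-summand case $n=2$), so that corollary directly yields the linearity of $T$ on $\mathcal{J} = \mathcal{J}_1 \oplus^{\ell_\infty} \mathcal{J}_2$.

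Finally, once $T$ is known to be linear, Lemma \ref{l basic properties}$(c)$ asserts that every linear 2-local triple homomorphism between JB$^*$-triples is itself a triple homomorphism, which completes the proof. Continuity is then automatic (it already follows from Lemma \ref{l basic properties}$(e)$, which shows that $T$ is $1$-Lipschitz regardless of linearity), and the Jordan $^*$-homomorphism statement in the target Peirce-$2$ subspace $F_2(T(1))$ follows as in the proof of Theorem \ref{t 2-local hom on JBW$^*$-algebras not containing M2 nor S2}.

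I do not expect a substantive obstacle: all heavy lifting was done in the preceding sections. The only mild point to verify is that the relevant type decomposition of a JBW$^*$-algebra is available and that $T$ restricts to a 2-local triple homomorphism on each $\ell_\infty$-summand; both are routine, since each summand is a weak$^*$-closed ideal (indeed a direct summand), so the triple homomorphisms $\Phi_{a,b}$ witnessing the 2-local behaviour of $T$ on $\mathcal{J}$ restrict to triple homomorphisms on each summand separately. Thus the argument is essentially a one-line assembly of the preceding results.
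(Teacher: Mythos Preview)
Your proposal is correct and matches the paper's approach exactly: the paper states the theorem as an immediate consequence of Theorem \ref{t 2-local hom on JBW$^*$-algebras not containing M2 nor S2}, Corollary \ref{c type I2}, and Corollary \ref{c stability for ell infty sums}, assembled via the standard decomposition of a JBW$^*$-algebra into its Type $I_2$ summand and a summand with no Type $I_2$ part. Your additional remarks (on the restriction of $T$ to each summand being 2-local, and on invoking Lemma \ref{l basic properties}$(c)$ for the triple homomorphism conclusion) simply make explicit what the paper leaves implicit.
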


The next corollary is interesting by itself.

\begin{corollary}\label{c 2-local triple hom on von Neumann algebras}
Every (not necessarily linear) 2-local triple homomorphism from a
von Neumann algebra into a JB$^*$-triple is linear and a triple
homomorphism.
\end{corollary}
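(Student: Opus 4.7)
The plan is to view the von Neumann algebra $M$ as a JBW$^*$-algebra and invoke Theorem \ref{t 2-local triple hom on JBW$^*$ algebras} directly; the corollary is essentially a packaging result once this identification is made precise.

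First, I would recall that every C$^*$-algebra $A$ acquires a JB$^*$-algebra structure when equipped with the Jordan product $a\circ b := \frac{1}{2}(ab+ba)$ and its original involution. When $A = M$ is a von Neumann algebra, this JB$^*$-algebra structure is in fact a JBW$^*$-algebra: $M$ is already a dual Banach space, and the Jordan product is separately weak$^*$ continuous because the associative product is.

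Second, I would verify that the two natural triple product structures on $M$ coincide. Expanding $(a\circ b^*)\circ c + (c\circ b^*)\circ a - (a\circ c)\circ b^*$ in terms of the associative product of $M$ yields exactly $\frac{1}{2}(ab^*c + cb^*a)$, which is the triple product on $M$ inherited from its C$^*$-algebra structure. Hence the notion of a (linear) triple homomorphism from $M$ into a JB$^*$-triple $F$ does not depend on whether one views $M$ as a C$^*$-algebra or as a JB$^*$-algebra; consequently, the same is true for the notion of a 2-local triple homomorphism.

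Third, given a (not necessarily linear) 2-local triple homomorphism $T: M \to F$, the previous two observations show that $T$ is also a 2-local triple homomorphism from the JBW$^*$-algebra $M$ into the JB$^*$-triple $F$. Theorem \ref{t 2-local triple hom on JBW$^*$ algebras} then applies and yields that $T$ is linear and a triple homomorphism, which is the desired conclusion.

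There is no real obstacle in this argument; the only matter of substance is the coincidence of the two triple product structures on $M$, which is a routine algebraic check. Once this is in place, the corollary is an immediate specialization of the main theorem of the preceding subsection.
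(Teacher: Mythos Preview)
Your proposal is correct and matches the paper's approach: the corollary is stated there without proof, as it follows immediately from Theorem \ref{t 2-local triple hom on JBW$^*$ algebras} once one observes that a von Neumann algebra is a JBW$^*$-algebra whose Jordan triple product coincides with the C$^*$-triple product. The algebraic verification you sketch is exactly the (routine) identification the paper relies on implicitly.
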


Since every $^*$-homomorphism between C$^*$-algebras (respectively, every Jordan $^*$-homomorphism between JB$^*$-algebras) is a triple homomorphism, Theorems 2.12 and 4.5 and Corollary 4.6 in \cite{BurFerGarPe2014preprint} are direct consequences of the previous Theorem \ref{t 2-local triple hom on JBW$^*$ algebras} and Corollary \ref{c 2-local triple hom on von Neumann algebras}.\smallskip

\subsection{2-local triple homomorphisms on a JBW$^*$-triple}

The rest of the note is devoted to prove the second main result of the paper, in which we shall show that every (not necessarily linear) 2-local triple homomorphism from a JBW$^*$-triple into a JB$^*$-triple is linear and a triple homomorphism. The first step toward our goal is the following corollary.

\begin{corollary}\label{c 2-local triple hom Peirce-2}
Let $T: E \to F$ be a (not necessarily linear) 2-local triple homomorphism from a JBW$^*$-triple into a JB$^*$-triple.
Then, for each tripotent $e$ in $E$, $T|_{E_2(e)} : E_2 (e) \to F$ is linear and a triple homomorphism.
\end{corollary}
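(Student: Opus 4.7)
The plan is to reduce this corollary directly to Theorem \ref{t 2-local triple hom on JBW$^*$ algebras} by viewing $T|_{E_2(e)}$ as a 2-local triple homomorphism from a JBW$^*$-algebra into the JB$^*$-triple $F$.

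First, I would observe that $E_2(e)$ is a JBW$^*$-algebra. By \cite[Theorem 2.2]{BraKaUp} and \cite[Theorem 3.7]{KaUp}, already cited in the preliminaries, $E_2(e)$ is a unital JB$^*$-algebra with unit $e$, product $a\circ_e b = \{a,e,b\}$, and involution $a^{\sharp_e} = \{e,a,e\}$. To promote this to a JBW$^*$-algebra, one needs that $E_2(e)$ is a dual Banach space. Since the triple product on the JBW$^*$-triple $E$ is separately weak$^*$-continuous, $L(e,e)$ is weak$^*$-continuous, and therefore so is the Peirce-2 projection $P_2(e) = L(e,e)(2L(e,e)-Id)$. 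Consequently, $E_2(e) = P_2(e)(E)$ is a weak$^*$-closed subspace of the dual space $E$, hence itself a dual Banach space.

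Next, I would verify that the restriction $T|_{E_2(e)} : E_2(e) \to F$ is a 2-local triple homomorphism (from a JBW$^*$-algebra into a JB$^*$-triple). Given $a,b \in E_2(e)$, the hypothesis on $T$ provides a triple homomorphism $\Phi_{a,b} : E \to F$ with $\Phi_{a,b}(a) = T(a)$ and $\Phi_{a,b}(b) = T(b)$. Since $E_2(e)$ is a JB$^*$-subtriple of $E$, the restriction $\Phi_{a,b}|_{E_2(e)} : E_2(e) \to F$ is a triple homomorphism that interpolates $T$ at $a$ and $b$. Hence $T|_{E_2(e)}$ meets the definition of a 2-local triple homomorphism.

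Finally, I would invoke Theorem \ref{t 2-local triple hom on JBW$^*$ algebras} (every 2-local triple homomorphism from a JBW$^*$-algebra into a JB$^*$-triple is linear and a triple homomorphism) applied to $T|_{E_2(e)}$ to conclude. I do not anticipate any real obstacle here; the only point requiring a small verification is the JBW$^*$-algebra structure on $E_2(e)$, and that follows routinely from separate weak$^*$-continuity of the triple product together with the cited Braun--Kaup--Upmeier result.
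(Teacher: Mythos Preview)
Your proposal is correct and follows exactly the same route as the paper, which simply says the corollary is ``clear from Theorem \ref{t 2-local triple hom on JBW$^*$ algebras}''; you have merely spelled out the implicit details (that $E_2(e)$ is a JBW$^*$-algebra via weak$^*$-closedness of the Peirce-2 subspace, and that restricting the ambient $\Phi_{a,b}$ to the subtriple $E_2(e)$ witnesses the 2-local property).
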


\begin{proof}
Clear from Theorem \ref{t 2-local triple hom on JBW$^*$ algebras}.
\end{proof}

We are now in a position to establish the goal of this section.

\begin{theorem}\label{t 2-local triple hom on JBW$^*$ triples}
Every (not necessarily linear) 2-local triple homomorphism from a
JBW$^*$-triple into a JB$^*$-triple is linear and a triple
homomorphism.
\end{theorem}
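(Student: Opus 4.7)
The plan is to establish additivity $T(a+b)=T(a)+T(b)$ for arbitrary $a,b\in E$; together with the $1$-homogeneity in Lemma~\ref{l basic properties}$(a)$ this gives complex linearity, and Lemma~\ref{l basic properties}$(c)$ then upgrades $T$ to a triple homomorphism. The strategy is to produce, for the given pair $a,b$, a single tripotent $f\in E$ with respect to which both $b$ and $a+b$ split as a Peirce-$2$ piece plus a Peirce-$1$ piece (i.e.\ have no Peirce-$0$ component), with $a$ itself lying entirely in $E_2(f)$; once this is arranged, Proposition~\ref{p 2-local in Peirce-2 Peirce-1 arithmetics} strips off the Peirce-$1$ part and Corollary~\ref{c 2-local triple hom Peirce-2} handles the additivity inside the JBW$^*$-algebra $E_2(f)$.

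To construct $f$ I would first set $e:=r(a)$, so that $a\in E_2(e)$, and decompose $b=b_2+b_1+b_0$ with $b_j\in E_j(e)$. The only obstruction to applying Proposition~\ref{p 2-local in Peirce-2 Peirce-1 arithmetics} directly at the tripotent $e$ is the Peirce-$0$ term $b_0$, which I absorb by letting $v:=r(b_0)$ denote the range tripotent of $b_0$ computed inside the JBW$^*$-subtriple $E_0(e)$, and setting $f:=e+v$. Since $v\perp e$, the identity $L(e+v,e+v)=L(e,e)+L(v,v)$ together with the Peirce rules forces $E_2(v)\subseteq E_0(e)$ (and symmetrically $E_2(e)\subseteq E_0(v)$), so the standard Peirce formulas for a sum of orthogonal tripotents specialize to
\[
E_2(f)=E_2(e)\oplus E_2(v)\oplus\bigl(E_1(e)\cap E_1(v)\bigr), \quad E_1(f)=\bigl(E_1(e)\cap E_0(v)\bigr)\oplus\bigl(E_0(e)\cap E_1(v)\bigr).
\]
Since $E_1(e)\cap E_2(v)\subseteq E_1(e)\cap E_0(e)=\{0\}$, the splitting $b_1=b_1'+b_1''$ with $b_1':=P_1(v)b_1$ and $b_1'':=P_0(v)b_1$ places $b_1'$ in $E_2(f)$ and $b_1''$ in $E_1(f)$. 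Consequently $a,b_2,b_0,b_1'$ all belong to $E_2(f)$ while $b_1''\in E_1(f)$, and neither $b$ nor $a+b$ has any Peirce-$0$ component relative to $f$.

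Writing $a+b=(a+b_2+b_0+b_1')+b_1''$ and $b=(b_2+b_0+b_1')+b_1''$ as (Peirce-$2$)$+$(Peirce-$1$) sums with respect to $f$, Proposition~\ref{p 2-local in Peirce-2 Peirce-1 arithmetics} yields
\[
T(a+b)=T\bigl(a+b_2+b_0+b_1'\bigr)+T(b_1''), \qquad T(b)=T\bigl(b_2+b_0+b_1'\bigr)+T(b_1''),
\]
while the linearity granted by Corollary~\ref{c 2-local triple hom Peirce-2} inside the JBW$^*$-algebra $E_2(f)$ delivers $T(a+b_2+b_0+b_1')=T(a)+T(b_2+b_0+b_1')$. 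Subtracting these identities produces $T(a+b)=T(a)+T(b)$. The main step requiring care is the Peirce bookkeeping in the construction of $f$, in particular the inclusion $E_2(v)\subseteq E_0(e)$ coming from $v\perp e$ and the resulting verification that each of $a,b_2,b_0,b_1'$ lands in $E_2(f)$ and $b_1''$ in $E_1(f)$; once those inclusions are in place the theorem is a one-line application of the two tools already established.
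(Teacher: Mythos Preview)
Your argument is correct, and it reaches the same destination as the paper's proof via the same two tools (Proposition~\ref{p 2-local in Peirce-2 Peirce-1 arithmetics} and Corollary~\ref{c 2-local triple hom Peirce-2}), but the choice of tripotent is genuinely different. The paper does not build the auxiliary tripotent $f=e+v$ at all: it invokes \cite[Lemma~3.12(1)]{Horn} to pick a \emph{complete} tripotent $e$ with $a\in E_2(e)$, so that $E_0(e)=\{0\}$ and $b$ automatically decomposes as $P_2(e)b+P_1(e)b$. One application of Proposition~\ref{p 2-local in Peirce-2 Peirce-1 arithmetics} and one of Corollary~\ref{c 2-local triple hom Peirce-2} then finish the proof in two lines. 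Your route avoids the structural input of complete tripotents and uses only range tripotents together with the joint Peirce decomposition for orthogonal tripotents; the price is the extra bookkeeping you already identified (in particular the vanishing $E_1(e)\cap E_2(v)=\{0\}$ for $e\perp v$, which does hold). Both approaches are valid; the paper's is shorter, while yours is slightly more self-contained in that it does not appeal to Horn's lemma on complete tripotents. Minor caveats: you should note separately the trivial cases $a=0$ (so $T(a+b)=T(b)=T(0)+T(b)$ by homogeneity) and $b_0=0$ (where $v=0$ and $f=e$), but these do not affect the argument.
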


\begin{proof} Let $x, y$ be two (arbitrary) elements in
$E$. Find a complete tripotent $e$ in $E$ such that $x\in E_2(e)$ (the existence of such a tripotent in guaranteed by \cite[Lemma 3.12$(1)$]{Horn}). If we write $y=P_2(e)y+P_1(e)y$, then Proposition \ref{p 2-local in Peirce-2 Peirce-1 arithmetics}
and Corollary \ref{c 2-local triple hom Peirce-2} prove that
$$T(x+y)=T(x+P_2(e)y+P_1(e)y)=T(x+P_2(e)y)+T(P_1(e))$$ $$=T(x)+T(P_2(e)y)+T(P_1(e)y).$$
A new application of Proposition \ref{p 2-local in Peirce-2 Peirce-1
arithmetics} shows that $T(P_2(e)y)+T(P_1(e)y)=T(P_2(e)y+P_1(e)y),$ and hence $T(x+y)=T(x)+T(y).$
\end{proof}

\end{document}